\newtheorem{theorem}{Theorem}[section]
\newtheorem{proposition}[theorem]{Proposition}
\newtheorem{lemma}[theorem]{Lemma}
\newtheorem{fact}[theorem]{Fact}
\newtheorem{corollary}[theorem]{Corollary}
\newtheorem{definition}[theorem]{Definition}
\newcommand\Z{\mathbb Z}
\newcommand\N{\mathbb N}
\newcommand\W{{\mathcal W}_{m,k}}
\title{Free limits of Thompson's group $F$}
\author[Azer Akhmedov]{Azer Akhmedov}
\address{Department of Mathematics, North Dakota State University}
\email{azer.akhmedov@ndsu.edu}
\author[Melanie Stein] {Melanie Stein}
\address{Department of Mathematics, Trinity College, Hartford, CT 06106}
\email{melanie.stein@trincoll.edu}
\author[Jennifer Taback] {Jennifer Taback}
\address{Department of Mathematics, Bowdoin College, Brunswick, ME 04011}
\email{jtaback@bowdoin.edu}
\thanks{The third author acknowledges partial support from National Science Foundation Grant DMS-0604645. The second and third authors acknowledge partial support from a Bowdoin College Faculty Research grant.
The authors would like to thank Matt Brin, Ben Fine, David Fisher, Olga Kharlampovich, Alexei Myasnikov and Roland Zarzycki for helpful conversations during the writing of this paper.}
\keywords{Thompson's group, limit group, free group, girth}
\date{\today}
\begin{document}

\maketitle

\begin{abstract}
We produce a sequence of markings $S_k$ of Thompson's group $F$ within the space ${\mathcal G}_n$ of all marked $n$-generator groups so that the sequence $(F,S_k)$ converges to the free group on $n$ generators, for $n \geq 3$.
In addition, we give presentations for the limits of some other natural (convergent) sequences of markings to consider on $F$ within ${\mathcal G}_3$, including $(F,\{x_0,x_1,x_n\})$ and $(F,\{x_0,x_1,x_0^n\})$.
\end{abstract}

\section{Introduction}

Sela defined the notion of a limit group in conjunction with his
solution to the problem of Tarski which asks whether all free
groups of rank at least $2$ have the same elementary theory
\cite{S1,S2}.  Limit groups arise in Sela's analysis of equations
in free groups, and he shows that they coincide with the class of
finitely generated, fully residually free groups [Se1].  Work of
Sela, along with Kharlampovich and Myasnikov \cite{S1,KM1,KM2}
shows that limit groups can be constructed recursively from
building blocks consisting of free, surface and free abelian
groups by taking a finite sequence of free products and
amalgamations over $\Z$.  Alternately, a group is a limit group in the sense of Sela
if and only if it is an iterated generalized double, defined below in Section \ref{sec:notfree}.

Define a marked group $(G,S)$ to be a group $G$ with a fixed and
ordered set of generators $S=\{g_1,g_2, \cdots ,g_n\}$, and let
${\mathcal G}_n$ be the set of all groups marked by $n$ elements
up to isomorphism of marked groups.  A marked group $(G,S)$ is
equipped with a canonical epimorphism from the free group on $|S|$
letters to $G$.  The space ${\mathcal G}_n$ admits a topology
in which two marked groups $(G_1,S_1)$ and $(G_2,S_2)$ are at
distance at most $e^{-R}$ if they have same relations of length at
most $R$. With respect to this topology, the limit groups of Sela,
equivalently the class of all finitely generated fully residually
free groups, arise naturally as limits of marked free groups.

This topological approach towards marked groups opens the
notion of limit groups to include limits of other, non free,
groups within ${\mathcal G}_n$, for a fixed $n$.  This topology
was defined in \cite{G} by Grigorchuk, and an earlier equivalent
construction was presented in \cite{C}; Champetier and Guirardel
study this topology in \cite{CG}, and Guyot and Stalder in
\cite{St,GS} investigate limits of marked copies of
Baumslag-Solitar groups.

Just as the class of finitely generated fully residually free
groups arise naturally as limits of marked free groups, one can
extend the definition of \lq \lq fully residually" to non-free
classes of groups in such a way that these groups arise naturally
as limits of marked sequences of other, non-free, groups. More
specifically, a finitely generated group $G$ is defined to be {\em
fully residually ${\mathcal P}$} (where we view ${\mathcal P}$ as
a property of groups, e.g. free or finite) if for any finite
collection $\{w_1,w_2, \cdots ,w_k\}$ of elements of $G$, there is
a surjective homomorphism $\phi$ from $G$ to a ${\mathcal P}$
group so that the images $\{\phi(w_1),\phi(w_2), \cdots
,\phi(w_k)\}$ of the original set of elements are all nontrivial.
In the case ${\mathcal P}$=free, we can omit the requirement that
$\phi$ be surjective, as any subgroup of a free group is itself
free.  Just as for the special case of fully residually free
groups, if a group $G$ is fully residually ${\mathcal P}$, there is a
sequence of markings of the ${\mathcal P}$ group so that the
sequence of marked groups converges to $G$. With this definition,
any group which is fully residually Thompson arises as a limit of
marked copies of Thompson's group $F$.  One aim of this paper is
to show that the free group $F_k$ for $k \geq 3$ is fully
residually Thompson.

The goal of this paper is to analyze several sequences of markings
of Thompson's group $F$.  We begin in Section \ref{sec:free} by partially
answering a question of Sapir, and show in Corollary
\ref{cor:free-limit} that there exist sequences of marked copies
of Thompson's group $F$ which converge to the free group $F_k$
within ${\mathcal G}_k$ for any $k \geq 3$, that is, we show that
the free group is fully residually Thompson.  Brin \cite{B} has
recently shown that there is a sequence of markings of $F$ in
${\mathcal G}_2$ which converges to the free group $F_2$.
This is related to the notion of a group exhibiting {\em $k$-free-like behavior},
defined by Olshanskii and Sapir  in \cite{OS}.   A group $G$ is said to be $k$-free-like for $k \geq 2$
if there exists a sequence of generating sets $Z_i$ for $i \geq
1$, each with $k$ elements, such that the Cayley graph
$\Gamma(G,Z_i)$ satisfies no relation of length less than $i$, and
the Cheeger constant of this graph is uniformly (in $i$) bounded
away from zero.  The Cheeger constant is the infimum over all
subsets $A$ of the group of the ratio of the size of the boundary
of $A$ to the size of $A$, with respect to a fixed generating set.
An answer to the question of whether $F$ is amenable will also decide
whether $F$ exhibits $k$-free-like behavior.

In Section \ref{sec:notfree} we investigate the limits of several
sequences of marked copies of $F$ within ${\mathcal G}_3$ where
the markings are chosen to be very ``natural", for example the
sequence $\{x_0,x_1,x_n\}$, where the generators are taken from
the standard infinite presentation for $F$.  We generalize our
concrete examples to sequences of markings where the third element
in the triple is simply subject to certain conditions on its
support.  We note that in each case, there is an amalgamated product (or HNN extension) of copies
of Thompson's group $F$, or subgroups of $F$, over a maximal abelian subgroup which is a generalized double over the limit group obtained.

The problem of determining all sequences of
markings of the form $\{x_0,x_1,g_n\}$ of $F$, where $g_n\in F$
which converge in ${\mathcal G}_3$, and the presentation of any
resulting limit groups, is extremely interesting to the authors.
In his thesis, Zarzycki considers this problem as well; he
has obtained preliminary results which state that limit of a
sequence of marked copies of $F$ using markings of the form
$\{x_0,x_1,g_n\}$ can never be a central HNN-extension  \cite{Z1,Z2}.  His
methods are significantly different from the techniques presented
here.

\section{Preliminaries}

In this section we present brief background material on several topics used in this paper.

\subsection{A brief introduction to Thompson's group $F$}
\label{sec:introF}

Thompson's group $F$ can be viewed as the group of piecewise-linear orientation-preserving homeomorphisms of the unit interval, subject to two conditions:
\begin{enumerate}
\item the coordinates of all breakpoints lie in the set of dyadic rational numbers, and
\item the slopes of all linear pieces are powers of $2$.
\end{enumerate}
Group elements can be viewed uniquely in this way if we require that the slope change at all given breakpoints.  Group multiplication then simply corresponds to function composition.

This group is commonly studied via a standard infinite presentation ${\mathcal P}$:
$${\mathcal P} = \langle x_0,x_1,x_2, \cdots  | x_i^{-1} x_j x_i = x_{j+1} \text{ for } i < j \rangle.$$
It is clear from the above presentation that $x_0$ and $x_1$ are sufficient to generate the group, and we thus obtain the standard finite presentation ${\mathcal F}$:
$${\mathcal F} = \langle x_0,x_1 | [x_1x_0^{-1},x_0^{-1}x_1x_0], \ [x_1x_0^{-1},x_0^{-2}x_1x_0^2] \rangle.$$
The generators $x_0, \ x_1$ and $x_n$ are depicted as homeomorphisms of the interval in Figure \ref{fig:generators}. Notice that the support of $x_n$ is exactly the interval $[1-\frac{1}{2^n},1]$.  As a consequence, any element with support contained in $[0, 1-\frac{1}{2^n}]$ will commute with $x_n$, as elements of $F$ with disjoint support always commute.  Analyzing the relators in the finite presentation ${\mathcal F}$, it is not hard to see that the support of $x_1x_0^{-1}$ is $[0, \frac{3}{4}]$, while the supports of $x_2 = x_0^{-1}x_1x_0$ and $x_3 = x_0^{-2}x_1x_0^2$ are contained in $[\frac{3}{4},1]$, hence these elements commute.

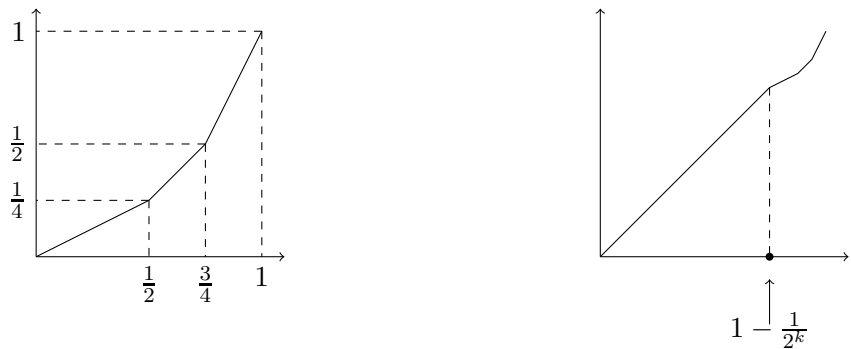
\begin{figure}[h!]
\begin{center}
\begin{tikzpicture}[scale=3]
\draw[->] (0,0) -- (1.1,0);
\draw[->] (0,0) -- (0,1.1);
\draw (0,0)--(.5,.25)--(.75,.5)--(1,1);
\draw[dashed] (.5,.25)--(.5,0);
\draw (.5,0) node [anchor=north]{$\frac{1}{2}$};
\draw[dashed] (.75,.5)--(.75,0);
\draw (.75,0) node [anchor=north]{$\frac{3}{4}$};
\draw[dashed] (1,1)--(1,0);
\draw (1,0) node [anchor=north]{1};

\draw[dashed] (.5,.25)--(0,.25);
\draw (0,.25) node [anchor=east]{$\frac{1}{4}$};
\draw[dashed] (.75,.5)--(0,.5);
\draw (0,.5) node [anchor=east]{$\frac{1}{2}$};
\draw[dashed] (1,1)--(0,1);
\draw (0,1) node [anchor=east]{1};

\draw[->] (2.5,0) -- (3.6,0);
\draw[->] (2.5,0) -- (2.5,1.1);
\draw (2.5,0)--(3,.5)--(3.25,.75)--(3.375,.8125)--(3.4375,.875)--(3.5,1);
\draw[dashed] (3.25,.75)--(3.25,0);
\fill (3.25,0) circle (.5pt);
\draw[->] (3.25,-.3)--(3.25,-.1);
\draw (3.25,-.2) node [anchor=north] {$1-\frac{1}{2^k}$};
\end{tikzpicture}
\caption{The generators $x_0$ and $x_n$ of $F$ as homeomorphisms of $[0,1]$.} \label{fig:generators}
\end{center}
\end{figure}

With respect to the infinite presentation ${\mathcal P}$, elements of $F$ have a standard (infinite) normal form, given by $$x_{i_1}^{r_1}
x_{i_2}^{r_2}\ldots x_{i_k}^{r_k} x_{j_l}^{-s_l} \ldots
x_{j_2}^{-s_2} x_{j_1}^{-s_1} $$ where $r_i, s_i >0$, $0 \leq i_1<i_2
\ldots < i_k$ and $0 \leq j_1<j_2 \ldots < j_l$, and if
both $x_i$ and $x_i^{-1}$ occur, so does $x_{i+1}$ or  $x_{i+1}^{-1}$, as discussed by
Brown and Geoghegan in \cite{BG}.   A {\em positive} word (resp. {\em negative word}) has normal form containing only positive (resp. negative) exponents.

For a thorough introduction to Thompson's group $F$ we refer the reader to \cite{CFP}.

\subsection{Girth and limit groups}

The girth of a group $G$ with respect to a finite generating set
$S$ is defined to be the length of the shortest relator satisfied
in $(G,S)$.  The girth of a finitely generated group $G$ is the
supremum of $girth(G,S)$ over all finite generating sets $S$ for
$G$.  It is proven in \cite{Ak} that a non-cyclic, finitely
generated hyperbolic group (or one-relator or linear group) has
infinite girth if and only if it is not virtually solvable.

We will use the notion of girth together with the following proposition of Stalder, which provides
 an algebraic formulation of convergence of a series of marked groups in this topology, to exhibit free limits of marked copies of Thompson's group $F$.
If $(G,S)$ is a marked group and $|S| = k$, we say that $S$ is a marking of $G$ of {\em length} $k$.

\begin{proposition}[\lbrack S\rbrack]\label{prop:stalder}
Let $(\Gamma_n,S_n)$ be a sequence of marked groups on $k$ generators.  The following are equivalent:
\begin{enumerate}\item $(\Gamma_n,S_n)$ is convergent in ${\mathcal G}_k$;
\item for all $w \in F_k$ we have either $w=1$ in $\Gamma_n$ for $n$ large enough, or $w \neq 1$ in $\Gamma_n$ for $n$ large enough, where $F_k$ is the free group on $k$ letters.
\end{enumerate}
\end{proposition}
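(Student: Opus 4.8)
The plan is to identify each marked group $(\Gamma_n, S_n)$ with the kernel $N_n \triangleleft F_k$ of its canonical epimorphism $\pi_n \colon F_k \to \Gamma_n$, so that a word $w \in F_k$ is a relation in $(\Gamma_n, S_n)$ precisely when $w \in N_n$, i.e. when $w = 1$ in $\Gamma_n$. Under this identification the metric reads as follows: $(\Gamma_m, S_m)$ and $(\Gamma_n, S_n)$ lie within $e^{-R}$ of one another exactly when $N_m$ and $N_n$ contain the same words of length at most $R$. The finiteness of the ball $B_R = \{w \in F_k : |w| \leq R\}$ in the free group will be the crucial finitary input that lets us pass between control word-by-word and uniform control on each ball.

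For the direction $(2) \Rightarrow (1)$, I would first show the sequence is Cauchy and then exhibit its limit. Fixing $R$, the ball $B_R$ is finite, so applying the hypothesis to each of its finitely many words produces a single index $N$ beyond which the truth value of ``$w = 1$ in $\Gamma_n$'' is constant simultaneously for every $w \in B_R$. Hence for $m, n \geq N$ the groups $\Gamma_m$ and $\Gamma_n$ share all relations of length at most $R$, giving distance at most $e^{-R}$; as $R$ is arbitrary the sequence is Cauchy. To name the limit I would set $N = \{w \in F_k : w \in N_n \text{ for all sufficiently large } n\}$ and verify that $N$ is a normal subgroup of $F_k$ (closure under products, inverses, and conjugation each holding because membership is eventual and any finite collection of words lies in a common ball). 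The marked group $(F_k/N, \overline{S})$ is then the desired limit, since for each $R$ its relations of length at most $R$ agree with those of $\Gamma_n$ for all large $n$.

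For $(1) \Rightarrow (2)$, suppose $(\Gamma_n, S_n)$ converges to some $(\Gamma, S)$. Given $w \in F_k$, set $R = |w|$; convergence provides an index $N_0$ so that for every $n \geq N_0$ the distance from $(\Gamma_n, S_n)$ to $(\Gamma, S)$ is less than $e^{-R}$, whence $\Gamma_n$ and $\Gamma$ have identical relations of length at most $R$. In particular, whether $w = 1$ holds in $\Gamma_n$ agrees with whether $w = 1$ holds in the limit $\Gamma$, for all $n \geq N_0$, so this truth value is eventually constant, which is exactly statement $(2)$.

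I expect the main obstacle to be the construction of the limit in $(2) \Rightarrow (1)$: checking that the eventual kernel $N$ is genuinely normal and that the quotient it defines is the actual topological limit rather than merely a cluster point. This is where one must carefully use that the free group has finite balls, so that eventual-constancy on individual words upgrades to the uniform agreement on $B_R$ needed to control the metric. Alternatively, one could shortcut the explicit construction by invoking compactness of ${\mathcal G}_k$, after which the Cauchy property immediately yields convergence; I would note this as a cleaner but less self-contained route.
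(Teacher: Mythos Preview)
Your argument is correct and is the standard proof of this equivalence. Note, however, that the paper does not actually supply a proof of this proposition: it is quoted from Stalder's paper \cite{St} (as indicated by the bracketed attribution in the proposition heading) and stated without proof, so there is no ``paper's own proof'' to compare against. Your write-up would serve perfectly well as the omitted verification; the only minor comment is that the explicit construction of the limit $F_k/N$ and the appeal to compactness of ${\mathcal G}_k$ are redundant with one another---either alone suffices once you have established the sequence is Cauchy---so in a final version you should commit to one.
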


In the sections below, we will be interested not only in whether particular
sequences of marked groups converge, but in the presentation of the limit group of such a
convergent subsequence.  The following proposition, while a restatement of the definition
of convergence of a sequence of marked groups, states explicitly how we characterize this limit group.

\begin{proposition}\label{convergence to anything}
Let $(\Gamma_n,S_n)$ be a sequence of marked groups on $k$
generators. Then $(\Gamma_n,S_n)$ converges to $(\Gamma,S)$ in
${\mathcal G}_k$ if for all words $w \in F_k$
\begin{enumerate}
\item if $w=1$ in $\Gamma$, then $w=1$ in $\Gamma_n$ for
sufficiently large $n$. \item if $w =1$ in $\Gamma_n$ for
infinitely many $n$, then $w=1$ in $\Gamma$.
\end{enumerate}
\end{proposition}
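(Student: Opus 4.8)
The plan is to unwind the definition of the topology on ${\mathcal G}_k$ and verify directly that the two numbered hypotheses force $(\Gamma_n,S_n)$ to lie within distance $e^{-R}$ of $(\Gamma,S)$ for every prescribed $R$, once $n$ is large. Recall that in this topology $(\Gamma_n,S_n)$ and $(\Gamma,S)$ are at distance at most $e^{-R}$ precisely when they satisfy exactly the same relations of length at most $R$, where a relation is a word $w \in F_k$ that evaluates to the identity under the canonical epimorphism from $F_k$. Thus convergence to $(\Gamma,S)$ is equivalent to the statement that for each fixed $R$ there is an index $N=N(R)$ so that for all $n \geq N$ and all $w \in F_k$ with $|w| \leq R$, one has $w=1$ in $\Gamma_n$ if and only if $w=1$ in $\Gamma$.

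First I would fix $R$ and exploit the essential finiteness: the set $B_R = \{w \in F_k : |w| \leq R\}$ is finite, since $F_k$ is finitely generated. I would then split $B_R$ into the trivial words $T_R = \{w \in B_R : w=1 \text{ in } \Gamma\}$ and its complement $B_R \setminus T_R$. For each $w \in T_R$, hypothesis (1) supplies an index $N_w$ beyond which $w=1$ in $\Gamma_n$. For each $w \in B_R \setminus T_R$ I would pass to the contrapositive of hypothesis (2): since $w \neq 1$ in $\Gamma$, it cannot be the case that $w=1$ in $\Gamma_n$ for infinitely many $n$, so $w=1$ in $\Gamma_n$ for only finitely many $n$, and hence there is an index $M_w$ beyond which $w \neq 1$ in $\Gamma_n$.

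Taking $N(R)$ to be the maximum of the finitely many thresholds $N_w$ and $M_w$ over all $w \in B_R$, I would conclude that for every $n \geq N(R)$ and every $w \in B_R$ the equality $w=1$ holds in $\Gamma_n$ exactly when it holds in $\Gamma$. This is precisely the assertion that $(\Gamma_n,S_n)$ and $(\Gamma,S)$ have identical relations of length at most $R$, i.e.\ that they are within distance $e^{-R}$ of one another. Since $R$ was arbitrary, $(\Gamma_n,S_n)$ converges to $(\Gamma,S)$, as desired.

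The argument is essentially bookkeeping against the definition of the metric, so there is no genuine obstacle; the only point requiring care is the conversion of the ``infinitely many $n$'' phrasing of hypothesis (2) into a ``for all sufficiently large $n$'' statement. This is exactly where the finiteness of $B_R$ is used, since it allows the individual per-word thresholds to be amalgamated into a single index $N(R)$ that works uniformly for the whole ball of radius $R$.
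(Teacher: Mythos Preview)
Your argument is correct and is exactly the unwinding of the metric definition that the paper has in mind; the paper itself gives no proof, remarking only that the proposition is ``a restatement of the definition of convergence of a sequence of marked groups.'' You have simply made explicit the finiteness-of-$B_R$ bookkeeping that justifies this remark.
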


In the case that the limit group is a free group, the first
property holds trivially, so we need only check the second.
Combining the notion of girth with the above proposition yields a
straightforward characterization of when a sequence of markings of
a fixed group converges to a free group. Namely:

\begin{proposition}\label{covergence to free}
 If a group $G$ has a sequence of markings $S_n$ of length
$k$ so that the girth of $(G,S_n)$ goes to infinity with $n$, then
the sequence $(G,S_n)$ converges to $F_k$ in ${\mathcal G}_k$.
\end{proposition}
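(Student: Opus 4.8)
The plan is to deduce this directly from Proposition~\ref{convergence to anything}, taking the candidate limit to be $(F_k, S)$ where $S$ is the standard free basis of $F_k$ and the underlying group $\Gamma_n$ is the fixed group $G$ for every $n$. With this choice I must verify the two conditions of that proposition, where throughout $w$ ranges over $F_k$ and I identify $w$ with its image in each marked group under the canonical epimorphism determined by the relevant marking.

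First I would dispose of condition (1), which the discussion preceding the statement already flags as automatic. If $w = 1$ in $\Gamma = F_k$ under the identity marking, then $w$ is the identity element of the free group, so it freely reduces to the empty word; the canonical epimorphism $F_k \to (G, S_n)$ therefore sends $w$ to the identity, and $w = 1$ in $G$ for every $n$, in particular for all sufficiently large $n$.

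The real content is condition (2), and this is precisely where the girth hypothesis is used. Suppose $w \in F_k$ satisfies $w = 1$ in $(G, S_n)$ for infinitely many $n$, and suppose toward a contradiction that $w \neq 1$ in $F_k$. Writing $\ell = |w|$ for the length of $w$ as a reduced word, we have $\ell \geq 1$, so $w$ is a genuine nontrivial word. For each index $n$ at which $w = 1$ in $(G, S_n)$, this $w$ is then a nontrivial relator of length $\ell$ in the marking $S_n$, whence $girth(G, S_n) \leq \ell$ by the definition of girth. As this inequality would hold for infinitely many $n$, the girth could not tend to infinity, contradicting the hypothesis. Hence $w = 1$ in $F_k$, which is exactly condition (2).

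With both conditions verified, Proposition~\ref{convergence to anything} gives convergence of $(G, S_n)$ to $(F_k, S)$ in ${\mathcal G}_k$. I anticipate no serious obstacle: the entire argument is the translation of the statement ``$w = 1$ in $(G, S_n)$ for infinitely many $n$'' into the girth bound $girth(G, S_n) \leq |w|$, after which divergence of the girth forces $w$ to be trivial in the free group. The only point requiring a little care is that girth counts only nontrivial relators, so one must invoke $w \neq 1$ in $F_k$ to ensure that $w$ is a genuine reduced relator rather than a freely trivial word; this is what makes the length $\ell$ a valid upper bound for the girth.
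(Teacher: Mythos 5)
Your proof is correct and follows exactly the route the paper intends: the paper states this proposition without a formal proof, treating it as the immediate combination of Proposition~\ref{convergence to anything} (condition (1) being vacuous for a free limit) with the observation that a nontrivial reduced word of length $\ell$ that dies in $(G,S_n)$ forces $girth(G,S_n)\leq\ell$. Your added care about excluding freely trivial words is exactly the right point to make explicit.
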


\section{Free limits of Thompson's group $F$}
\label{sec:free}

The goal of this section is to prove that there is a sequence
$\{S_n\}$ of markings of $F$ so that $(F,S_n)$ converges to the
free group of rank $k$ in ${\mathcal G}_k$, for all $k \geq 3$.
This is proven by exhibiting a sequence of markings of $F$ of a fixed length so that
the girth of the group with respect to these markings approaches infinity.  As a corollary we obtain that the same sequence of \
markings on $F$ converges to the free group of the appropriate rank.
As the finiteness of the girth of a group is closely related to whether the group satisfies a law, we first recall
the result of Brin and Squier \cite{BS1} (and reproven by Ab$\acute{e}$rt \cite{A} and later by Esyp \cite{E}) that Thompson's group $F$ satisfies no law.

First define $\W$ to be the set of all nontrivial reduced words in the free group $F(a,b_1,b_2, \cdots ,b_{k-1})$ of rank $k$ of length at most $m$.
The reason for using both $a$ and $b_j$ to denote the generators of the free group will be clear below.

\begin{proposition} \label{prop:length-at-most-m}
Fix $m \in \N$ and let $k \geq 2$.  There exist $u_1, u_2, \cdots u_{k} \in F$ so that for any $w \in \W$ the word $w(u_1, u_2, \cdots u_{k}) \in F$ is nontrivial.
\end{proposition}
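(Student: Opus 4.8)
The plan is to exploit the theorem of Brin and Squier, cited above, that $F$ satisfies no law, together with the structural fact that $F$ contains commuting copies of itself supported on disjoint dyadic subintervals. The first observation is that $\W$ is a set of reduced words of length at most $m$ over the finite alphabet $\{a^{\pm 1}, b_1^{\pm 1}, \ldots, b_{k-1}^{\pm 1}\}$, hence a \emph{finite} set; I would enumerate its elements as $w_1, w_2, \ldots, w_N$. The task then reduces to defeating each of these finitely many words simultaneously.

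For each index $j$, the word $w_j$ is a nontrivial reduced word in $k$ free generators. Because $F$ satisfies no law, the evaluation map $F^k \to F$ sending $(g_1, \ldots, g_k)$ to $w_j(g_1, \ldots, g_k)$ is not identically trivial, so I may choose a tuple $(g_1^{(j)}, \ldots, g_k^{(j)}) \in F^k$ with $w_j(g_1^{(j)}, \ldots, g_k^{(j)}) \neq 1$ in $F$. This handles each relation in isolation; the real content is to assemble these $N$ separate witnesses into a single tuple $(u_1, \ldots, u_k)$ that falsifies all of the $w_j$ at once.

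To accomplish this I would partition $[0,1]$ into $N$ consecutive subintervals $I_1, \ldots, I_N$ with dyadic endpoints. The subgroup of $F$ consisting of homeomorphisms supported in $I_j$ is isomorphic to $F$, via rescaling and extension by the identity; denote the resulting injective homomorphism by $\iota_j \colon F \to F$. Since elements of $F$ supported on disjoint intervals commute, the images $\iota_1(F), \ldots, \iota_N(F)$ pairwise commute. I then set
$$u_i \;=\; \iota_1(g_i^{(1)}) \, \iota_2(g_i^{(2)}) \cdots \iota_N(g_i^{(N)}), \qquad 1 \le i \le k.$$
Because $\iota_{j'}(g)$ restricts to the identity on $I_j$ whenever $j' \neq j$, each $u_i$ preserves $I_j$ and restricts there to exactly $\iota_j(g_i^{(j)})$. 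Consequently, for each $j$ the restriction of $w_j(u_1, \ldots, u_k)$ to $I_j$ equals $\iota_j\bigl(w_j(g_1^{(j)}, \ldots, g_k^{(j)})\bigr)$, which is nontrivial since $\iota_j$ is injective and $w_j(g_1^{(j)}, \ldots, g_k^{(j)}) \neq 1$. Hence $w_j(u_1, \ldots, u_k)$ acts nontrivially on $I_j$ and so is nontrivial in $F$, for every $w_j \in \W$, as required.

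The only point demanding genuine care — the main obstacle — is verifying that word evaluation localizes correctly, namely that restricting $w_j(u_1, \ldots, u_k)$ to the invariant interval $I_j$ genuinely coincides with $\iota_j\bigl(w_j(g_1^{(j)}, \ldots, g_k^{(j)})\bigr)$. This holds because restriction to the invariant set $I_j$ is a homomorphism, each $\iota_j$ is a homomorphism, and the factors $\iota_{j'}$ with $j' \neq j$ contribute the identity on $I_j$; once this localization is confirmed, injectivity of $\iota_j$ immediately yields nontriviality. Everything else is routine bookkeeping about disjoint supports in $F$.
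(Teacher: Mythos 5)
Your proposal is correct and follows essentially the same route as the paper's proof: enumerate the finitely many words in $\W$, defeat each one separately using the fact that $F$ satisfies no law, and splice the witnesses together via commuting copies of $F$ supported on disjoint dyadic subintervals of $[0,1]$. The only cosmetic difference is that you write each $u_i$ as a product of commuting images $\iota_j(g_i^{(j)})$ while the paper defines the same element piecewise interval by interval, and you make the localization step explicit where the paper simply says the result is nontrivial on some $I_j$ by construction.
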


\begin{proof}
Suppose that there are $l$ words $w_1, \cdots ,w_l$ in $\W$.  As
$F$ satisfies no group law, for each $i$ with $1 \leq i \leq l$ we
can find $u'_{i,1}, u'_{i,2}, \cdots u'_{i,k} \in F$ so that
$w_i(u'_{i,1}, u'_{i,2}, \cdots u'_{i,k} )$ is nontrivial. For any
two dyadic rationals $a$ and $b$ with $a<b$, there is an
isomorphism $\phi_{[a,b]}$ from $F$ to the isomorphic copy of $F$
supported on the interval $[a,b]$, which we denote
$F_{[a,b]}$, as detailed in \cite{BiSt}. Choosing dyadic rationals $0=a_0<a_1<a_2 \cdots
<a_{i-1}<a_i=1$, let $I_j=[a_{j-1},a_j]$ for $1 \leq j \leq
i$ and define $\phi_{I_j}:F \rightarrow F_{I_j}$ to be the corresponding
isomorphisms.  Let $u_{j,r} = \phi_{I_j}(u'_{j,r})$ for $1 \leq r
\leq k$ and $1 \leq j \leq l$. It is clear that the support of
$u_{j,r}$ lies in the interval $I_j$.

For $1 \leq r \leq k$, define $u_r(x) = u_{j,r}(x)$ for $x \in
I_j$.  If $w \in \W$, then $w(u_1, u_2, \cdots u_{k})$ must be
nontrivial on at least one interval $I_j$ by construction.
\end{proof}

The next proposition shows that for any word $w=w(a,b_1,b_2,
\cdots b_{k-1}) \in \W$, we can always find $u_1,u_2, \cdots
u_{k-1} \in F$ so that $w(x_0,u_1,u_2, \cdots ,u_{k-1})$ is
nontrivial. We first state a basic lemma regarding the
construction of elements of $F$ with certain proscribed values.  A
proof of this lemma can be found in \cite{CFP}.

\begin{lemma}[\lbrack CFP\rbrack, Lemma 4.2]\label{lemma:dyadic-list}
If $0=x_0 < x_1 < x_2 < \cdots < x_n = 1$ and $0=y_0 < y_1 < y_2 <
\cdots < y_n = 1$ are partitions of $[0,1]$ consisting of dyadic
rational numbers,  then there exists $f \in F$ so that $f(x_i) =
y_i$ for $i=0,1,2, \cdots ,n$.  Furthermore, if $x_{i-1} =
y_{i-1}$ and $x_i = y_i$ for some $i$ with $1 \leq i \leq n$, then
$f$ can be taken to be the identity on the interval
$[x_{i-1},x_i]$.
\end{lemma}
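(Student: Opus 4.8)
The plan is to build $f$ directly from the combinatorial description of $F$ in terms of \emph{standard dyadic intervals}, meaning intervals of the form $[\ell/2^s,(\ell+1)/2^s]$. An element of $F$ is precisely a pair of subdivisions of $[0,1]$ into the \emph{same} number of standard dyadic intervals, a domain subdivision $[0,1]=J_1\cup\cdots\cup J_N$ and a range subdivision $[0,1]=J_1'\cup\cdots\cup J_N'$, together with the homeomorphism carrying each $J_t$ affinely onto $J_t'$; the slope on $J_t$ is $|J_t'|/|J_t|$, a quotient of two powers of $2$ and hence itself a power of $2$, and every breakpoint is dyadic. So it suffices to produce two such subdivisions of $[0,1]$, matched up in order, so that $x_i$ is an endpoint on the domain side precisely when $y_i$ is an endpoint at the corresponding position on the range side.

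First I would work one block at a time. For each $i$ consider the dyadic subintervals $[x_{i-1},x_i]$ and $[y_{i-1},y_i]$; if I can tile each by the same number $c_i$ of standard dyadic intervals, then concatenating these tilings over $i=1,\dots,n$ yields domain and range subdivisions of $[0,1]$ with the same total $\sum_i c_i$ of intervals, matched blockwise, and the associated element of $F$ sends every $x_i$ to $y_i$. The step I expect to require the most care is showing the common count $c_i$ is always attainable. Any dyadic interval $[a,b]$ admits a tiling by standard dyadic intervals --- writing $a=p/2^m$ and $b=q/2^m$ over a common denominator exhibits $[a,b]$ as the union of the $q-p$ level-$m$ intervals lying between them --- and \emph{splitting} a single tile $[\ell/2^s,(\ell+1)/2^s]$ into its two halves increases the tile count by exactly one. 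Hence, starting from any tiling, every larger count is realizable, and I match the two sides by taking $c_i$ to be the larger of the two starting counts and refining the shorter side by repeated splitting until both have $c_i$ tiles.

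With the two matched subdivisions in hand, I define $f$ to send the $t$-th domain tile affinely onto the $t$-th range tile. Monotonicity and continuity at the shared endpoints are immediate, since consecutive tiles map to consecutive tiles in the same order; the slopes are powers of $2$ and the breakpoints dyadic, so $f\in F$, and by construction $f(x_i)=y_i$ for all $i$. For the ``furthermore'' clause, the blocks are handled independently, so whenever $x_{i-1}=y_{i-1}$ and $x_i=y_i$ I simply use the \emph{same} tiling of $[x_{i-1},x_i]=[y_{i-1},y_i]$ on both the domain and range sides; then $f$ maps each tile of that block affinely onto itself, i.e. acts as the identity there. An alternative would be to induct on $n$, first moving $x_1$ to $y_1$ and then passing to the isomorphic copy $F_{[y_1,1]}$ for the remaining points, but I prefer the direct construction, which makes the slope and breakpoint conditions transparent and treats the fixed blocks uniformly.
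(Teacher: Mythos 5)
Your proof is correct, and it is essentially the standard argument: the paper does not prove this lemma itself but cites [CFP], where the proof is precisely this blockwise matching of standard dyadic subdivisions, refined by halving tiles until the counts agree. The only point worth double-checking, which you handle correctly, is that refinement can realize any tile count at least as large as the initial one, so the two sides of each block can always be equalized.
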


\begin{proposition}
\label{prop:x0-u-substitution} Fix $m \in \N$, $0<\epsilon < \frac{1}{2}$, and $k \geq 2$.
For any $w=w(a,b_1,b_2, \cdots b_{k-1}) \in \W$, there exist elements
$u_1,u_2, \cdots u_{k-1} \in F$ so that for $1 \leq i \leq k-1$, we have $Support(u_i) \subseteq [0, \epsilon )$ and $w(x_0,u_1,u_2, \cdots
,u_{k-1})|_{[0, \epsilon)}\neq 1$.
\end{proposition}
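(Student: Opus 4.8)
The plan is to find a single dyadic point $p \in (0,\epsilon)$ together with elements $u_1,\ldots,u_{k-1}$ supported in $[0,\epsilon)$ such that $w(x_0,u_1,\ldots,u_{k-1})(p)\neq p$. Since $p\in[0,\epsilon)$, any such witness shows at once that $w(x_0,u_1,\ldots,u_{k-1})|_{[0,\epsilon)}\neq 1$, so there is no need to control the map away from $[0,\epsilon)$. Write $w$ as an alternating product $x_0^{a_0}v_1 x_0^{a_1}v_2\cdots v_s x_0^{a_s}$, where each $v_i$ is a single letter $u_{j_i}^{\pm 1}$ and the total number of letters is at most $m$. If $s=0$ then $w$ is a nonzero power of $a$, so $w(x_0,\ldots)=x_0^{a_0}$ is nontrivial on all of $(0,1)$ and we may take the $u_j$ trivial; so assume $s\geq 1$.

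I would compute the image of $p$ under this product by applying the letters to $p$ from right to left, building the $u_j$ as I proceed by means of Lemma \ref{lemma:dyadic-list}. After applying $x_0^{a_s}$ to $p$ I reach a point $z_s$ at which $v_s$ must act; I am free to declare the value $y_s=v_s(z_s)$ to be any dyadic number, and this choice, after the fixed power $x_0^{a_{s-1}}$ is applied, determines the point $z_{s-1}$ at which the next letter acts. Continuing, the output values $y_i=v_i(z_i)$ are entirely at my disposal, subject only to the requirement that each $u_j$ be a well-defined, order-preserving element of $F$.

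The essential difficulty — and the step I expect to be the main obstacle — is that a generator $b_j$ may occur several times in $w$, so $u_j$ is evaluated at several of the points $z_i$ and these evaluations must be mutually consistent. This obstruction is genuine: if one naively places all the $u_j$ inside a single fundamental domain of $x_0$ (so that distinct $x_0$-conjugates of their supports are disjoint), then the assignment $a\mapsto x_0,\ b_j\mapsto u_j$ factors through a wreath-product-type quotient in which $w$ may die. For instance, for $w=b_1 a b_2 a^{-1} b_1^{-1} a b_2^{-1} a^{-1}$ one computes $w(x_0,u_1,u_2)=[u_1,\,x_0 u_2 x_0^{-1}]$, which is trivial whenever $u_1$ and $x_0 u_2 x_0^{-1}$ have disjoint support. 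To defeat this collapse I would arrange, one step at a time, that \emph{all} of the evaluation points $z_s,z_{s-1},\ldots,z_1$ are pairwise distinct: at each stage the output value just chosen determines the next evaluation point through the homeomorphism $x_0^{a_i}$, so I can always steer $z_i$ away from the finitely many points already used while keeping it inside $(0,2^{-m}\epsilon)$. Here I use that $x_0$ is multiplication by $\frac{1}{2}$ on $[0,\frac{1}{2}]$, so bounded powers of $x_0$ distort $(0,2^{-m}\epsilon)$ by a factor at most $2^m$ and never leave $[0,\epsilon)$.

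With all evaluation points made distinct, the finitely many constraints imposed on each $u_j$ form a strictly increasing partial correspondence between dyadic points (the occurrence $v_i=u_{j_i}^{\pm 1}$ contributes the pair $(z_i,y_i)$ or $(y_i,z_i)$), and these are automatically compatible; I then realize each $u_j$ as an element of $F$ that is the identity on a neighborhood of $1$, hence supported in $[0,\epsilon)$, by the furthermore clause of Lemma \ref{lemma:dyadic-list}. At the final step I choose the output so that the terminal point $x_0^{a_0}(y_1)$ differs from $p$; as this excludes only one value of $y_1$, it remains compatible with the monotonicity required of $u_{j_1}$. The resulting substitution then satisfies $w(x_0,u_1,\ldots,u_{k-1})(p)\neq p$, and I expect the only delicate point to be the bookkeeping that simultaneously keeps the evaluation points distinct and preserves the monotonicity of each $u_j$ throughout the construction.
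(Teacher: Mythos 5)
Your overall strategy is the same as the paper's: pick a single dyadic witness point, evaluate the word from right to left, record the required values of each $u_j$ as you go, and realize each $u_j$ by Lemma \ref{lemma:dyadic-list}. You have also correctly identified the real obstruction (repeated occurrences of the same $b_j$, and the wreath-product collapse when all the $u_j$ live in one fundamental domain of $x_0$), which is exactly what the paper's interval bookkeeping is designed to defeat. However, the execution has a gap precisely where the paper does its work. The assertion that once the evaluation points are pairwise distinct the constraints on each $u_j$ ``are automatically compatible'' is false as stated: distinctness of the domain points does not imply the partial correspondence is order-preserving. If an earlier occurrence has forced $u_j(z)=y$ and a later one forces $u_j(z')=y'$ with $z'>z$ but $y'<y$, no increasing homeomorphism realizes both, and nothing in ``the points are distinct'' rules this out. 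Monotonicity is an additional inductive invariant: at each step the fixed coordinate $z_i$ sits in a unique gap of the existing domain (or range) points of $u_{j_i}$, and $y_i$ must be chosen in the \emph{matching} gap on the other side. This can be maintained --- each gap is a nonempty open interval, so a dyadic choice avoiding the finitely many further bad values exists --- but it is the heart of the proof and must be carried out; it is the analogue of the paper's page-long verification that its chosen sequences $y_s^i$ interleave consistently. You flag this as ``the delicate point'' but do not discharge it.

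A second, related overclaim: you cannot in general keep every output value $y_i$ inside $(0,2^{-m}\epsilon)$, because the monotonicity gap for $y_i$ may lie entirely above some previously constructed point, and previously constructed points can already be as large as $2^{j}p$ for a partial exponent sum $j$ of the $a$'s. What is true, and what you actually need, is a budget argument: the maximum of all constructed points is multiplied by at most $2^{|a_i|}$ at each application of $x_0^{a_i}$ (plus an arbitrarily small additive slack when a new range value must exceed all old ones), and since the partial sums $\sum|a_i|$ are bounded by $m$, starting from $p<2^{-m-1}\epsilon$ keeps everything in $(0,\epsilon)$. The paper avoids both issues simultaneously by fixing pairwise disjoint intervals $I_i=x_0^i(I_0)\subset(0,\epsilon)$ for $|i|\le m$, insisting that each $u_j$ preserve every $I_i$ setwise, and choosing the prescribed values as increasing sequences within each interval, so that order-compatibility and containment are built in rather than steered. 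Your greedy variant can be completed, but as written the two key verifications are asserted rather than proved, and one of them is asserted in a form that is literally incorrect.
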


\begin{proof}
As $m$ is fixed, define ${\mathcal N}_m = \{-m,-(m-1), \cdots
,0,1,2, \cdots ,m\}$.  Choose $I_0 \subset (0,\epsilon)$ to be
a small closed interval whose endpoints lie in $\Z[\frac{1}{2}]$
with the property that the collection of intervals
$\{I_i=x_0^i(I_0)\}$ is pairwise disjoint for $i \in {\mathcal
N}_m$, and all $I_i \subset (0,\epsilon)$. Given $w \in \W$ we
will construct $u_1,u_2, \cdots u_{k-1} \in F$ supported in
$\cup_{i \in {\mathcal N}_m} I_i$ so that $w(x_0,u_1,u_2, \cdots
u_{k-1})$ is nontrivial.

More specifically, choosing $x$ in the interior of $I_0$, we will
define a sequence of elements $u_1,u_2, \cdots u_{k-1}$ in $F$ so
that $w(x_0,u_1,u_2, \cdots ,u_{k-1})(x) \neq x$.  Each element
$u_j$ will be constructed so that
\begin{enumerate}
\item $u_j$ preserves each interval, though not point-wise, that is, $u_j(I_i) = I_i$ for $i \in {\mathcal N}_m$, and
\item $u_j$ is the identity off the union of these intervals, that is, for any $y \notin \cup_{i \in {\mathcal N}_m} I_i$, we have $u_j(y)=y$.
\end{enumerate}
The construction will be accomplished by selecting, for each $j$
and each $i$, a (possibly empty) pair of sequences of points
$\alpha_1 < \alpha_2 < \cdots \alpha_r$ and $\beta_1 < \beta_2 <
\cdots < \beta_r$ in the interior of $I_i$, and applying Lemma
\ref{lemma:dyadic-list} to define $u_j|_{I_i}$.

To begin the construction, choose $x \in {\mathbb Z}[\frac{1}{2}]$
in the interior of $I_0$. Let $$w=a^{e_1}B_1 a^{e_2} B_2 \cdots
a^{e_r}B_r$$ be a word in $F(a,b_1,b_2, \cdots ,b_{k-1})$ where
each $B_j$ is a word in $b_1^{\pm 1},b_2^{\pm 1}, \cdots
,b_{k-1}^{\pm 1}$. We allow $e_1 = 0$ and $B_r = 1$, but $e_l \neq
0$ for $1<l\leq r$ and $B_l$ is not the trivial word for $1 \leq l
 < r$.

For $1 \leq s \leq r$, let $B_s=b_{s,1}b_{s,2} \dots b_{s,q_s}$ for $b_{s,j} \in
\{b_1^{\pm1}, \dots ,b_{k-1}^{\pm 1} \}$. All points that are
chosen in the construction below are assumed to lie both in the
interior of an interval $I_j$ and in $\Z[\frac{1}{2}]$. First
choose an increasing sequence of dyadic rationals in the interior
of $I_0$ beginning with $x$ (which we relabel in keeping with our
indexing scheme),
$$x=y_r=y_r^{q_r+1}< y_r^{q_r} < y_r^{q_r-1} < \cdots <
y_r^1=x_r.$$ Note that the length of this sequence is one more
than the length of $B_r$.  Moving through the word $w$ from right
to left, we now consider $a^{e_r}$, which indicates
which interval we use to choose the next sequence of points. More precisely, recalling that $x_0$ will be substituted for $a$, this next sequence will be chosen in the interior of
$x_0^{e_r}(I_0)$, beginning with the image of the final point in the
previous sequence under $x_0^{e_r}$ and with length $|B_{r-1}|+1$.
Namely, let $y_{r-1}=x_0^{e_r}(x_r) \in x_0^{e_r}(I_0)=I_j$ for $I_j \neq
I_0$, and then choose an increasing sequence of dyadic rationals in
the interior of $I_j$,
$$y_{r-1}=y_{r-1}^{q_{r-1}+1} < y_{r-1}^{q_{r-1}} <
y_{r-1}^{q_{r-1}-1}< \cdots < y_{r-1}^1=x_{r-1}.$$

Continue in this way through the word $w$, constructing increasing sequences of points in
the various intervals $I_j$. We remark that for each index $s$ the sequence constructed consists of
more than one point, with the possible exception of $s=r$, that is, the initial sequence constructed.
When $s \neq r$, we know that $B_s \neq 1$, hence $y_s < x_s$.
Notice that it is possible for more than one such sequence to be chosen
within a single interval $I_j$. If it is the case that two sequences of the
form $\{ y_l^i \}$ and $\{y_h^i\}$ for fixed $l$ and $h$ where
$l<h$ both are chosen in $I_j$, then by construction, $y_h^s \leq y_l^p$ for any
superscripts $s$ and $p$. In fact, we claim this inequality is
strict. To see this, note that since $h >1$, $e_h \neq 0$, and
hence $h-1 \neq l$. In particular, $l+1 \neq r$, and hence
$y_{l+1} < x_{l+1}$. Therefore, $y_h^s \leq y_{l+1} < x_{l+1} \leq
y_l^p$, and so $y_h^s < y_l^p$, as desired.

Now for each pair of points $y_s^{i+1}<y_s^i$, if $b_{s,i}=b_n^{\pm1}$, we define an ordered pair of points $(z, u_n(z))$ as follows.
If $b_{s,i}=b_n$, choose $(z, u_n(z))=(y_s^{i+1},y_s^i)$, and if $b_{s,i}=b_n^{-1}$, choose $(z, u_n(z))=(y_s^i, y_s^{i+1})$.
We claim that for any $n$, the collection of pairs $\{(z_{\alpha},
u_n(z_{\alpha}))\}$ defined above satisfy the hypotheses of Lemma
\ref{lemma:dyadic-list}. Namely, the domain points
$\{z_{\alpha}\}$ are all distinct, and if $z_{\alpha_1} <
z_{\alpha_2}$ then $u_n(z_{\alpha_1}) < u_n(z_{\alpha_2})$.

To verify this claim, we must show that if
$(z_1,u_n(z_1))$ and $(z_2,u_n(z_2))$ are two such pairs,
corresponding to two letters $b_n^{\epsilon_1}$ and
$b_n^{\epsilon_2}$ in the word $w$, for some $n \in\{1, 2, \ldots,
k-1\}$ and $\epsilon_1,\epsilon_2 \in \{+1,-1\}$ where $z_1 \leq
z_2$, then in fact $z_1 \neq z_2$ and $u_n(z_1) < u_n(z_2)$.

To see this, first consider the case that both $b_n^{\epsilon_1}$
and $b_n^{\epsilon_2}$ occur within the same subword $B_j$. If
$\epsilon_1=\epsilon_2$, the claim is clear. If $\epsilon_1 \neq
\epsilon_2$, since $B_j$ is freely reduced, $b_n^{\epsilon_1}$ and
$b_n^{\epsilon_2}$ are not adjacent in the word. Hence,
$b_n^{\epsilon_1}=b_{j,i}$ and $b_n^{\epsilon_2}=b_{j,l}$ with
$l+1<i$, and $y_j^{i+1}<y_j^i<y_j^{l+1}<y_j^l$. Hence, regardless
of the values of $\epsilon_1$ and $\epsilon_2$, since we have the
setwise equalities $\{z_1, u_n(z_1)\}=\{y_j^{i+1},y_j^{i}\}$ and
$\{z_2, u_n(z_2)\}=\{y_j^{l+1},y_j^{l}\}$, we see that $z_1 \neq
z_2$ and $u_n(z_1)<u_n(z_2)$.

On the other hand, suppose $b_n^{\epsilon_1}$
is in the subword $B_j$ and $b_n^{\epsilon_2}$ is in the subword
$B_k$ with $k\neq j$. Then if $z_1\in I_i$ and $z_2 \in I_l$ with
$i \neq l$, since $I_l \cap I_i=\emptyset$, the claim is clearly
true. So suppose both $z_1$ and $z_2$ are in $I_i$. Then since
$z_1 \leq z_2$, $k<j$. Then $\{z_1,
u_n(z_1)\}=\{y_j^{r},y_j^{r+1}\}$ and $\{z_2,
u_n(z_2)\}=\{y_k^{s},y_k^{s+1}\}$ for some superscripts $r$ and
$s$. Then as previously established, $z_1<z_2$ and
$u_n(z_1)<u_n(z_2)$.

We have shown that for a given $n \in \{1, 2, \dots,
k-1\}$, the collection of all of the pairs $\{(z_{\alpha},
u_n(z_{\alpha}))\}$ defined above satisfy the hypothesis Lemma
\ref{lemma:dyadic-list}.
Therefore, we can apply Lemma \ref{lemma:dyadic-list} for any pair
$j$ and $n$ to define $u_n|_{I_j}$. If, for a given pair, no
points have been chosen in $I_j$ to be domain and range points for
$u_n$, then simply define $u_n|_{I_j}$ to be the identity. Also,
for points $y$ outside all intervals $I_j$, define $u_n(y)=y$ for
any $n$. If it is the case that $exp_a(w)$, the sum of the
exponents of all instances of $a$ in the word $w$, is zero then
$w(x_0,u_1,u_2, \cdots ,u_{k-1})(x) \in I_0$, and hence by
construction, $w(x_0,u_1,u_2, \cdots ,u_{k-1})(x)>x$.  If
$exp_a(w) \neq 0$, then $w(x_0,u_1,u_2, \cdots ,u_{k-1})(x) \notin
I_0$, and hence $w(x_0,u_1,u_2, \cdots ,u_{k-1})(x) \neq x$.  In
either case, $w(x_0,u_1,u_2, \cdots ,u_{k-1})$ is nontrivial.
\end{proof}

We next extend Proposition \ref{prop:x0-u-substitution} to a collection of words
in $F(a,b_1, b_2, \cdots ,b_{k-1})$ of length at most $m$.  The proof of this next
proposition is analogous to that of Proposition
\ref{prop:length-at-most-m}.

\begin{proposition}
\label{prop:multiple-words}
Fix $m \in \N$, $k \geq 2$, $0 < \epsilon < \frac{1}{2}$ and let $w_1,w_2, \cdots ,w_q \in \W$.  There exists $u_1,u_2, \cdots ,u_{k-1} \in F$ with $Support(u_j) \subseteq [0, \epsilon)$ for $1 \leq j \leq k-1$ and $w_i(x_0,u_1,u_2, \cdots ,u_{k-1})|_{[0, \epsilon)}\neq 1$ for all $1 \leq i \leq q$.
\end{proposition}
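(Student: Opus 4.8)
The plan is to combine the single-word construction of Proposition \ref{prop:x0-u-substitution} with the disjoint-interval technique used in the proof of Proposition \ref{prop:length-at-most-m}. The difficulty relative to Proposition \ref{prop:length-at-most-m} is that here the first generator must remain the fixed element $x_0$, whose support is all of $[0,1]$, so we cannot simply place $q$ independent isomorphic copies of $F$ on disjoint subintervals. Instead, I would exploit the dynamics of $x_0$ itself to produce $q$ mutually disjoint families of intervals on which the single-word arguments can be run in parallel with one common choice of $u_1,\dots,u_{k-1}$.

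Concretely, first choose, exactly as in the proof of Proposition \ref{prop:x0-u-substitution}, a small closed interval $K \subset (0,\epsilon)$ with dyadic endpoints such that the images $\{x_0^i(K)\}_{i \in {\mathcal N}_m}$ are pairwise disjoint and all contained in $(0,\epsilon)$, where ${\mathcal N}_m = \{-m,\dots,m\}$. Next subdivide $K$ into $q$ pairwise disjoint closed subintervals $K_1,K_2,\dots,K_q$ with dyadic endpoints. Since each $x_0^i$ is an order-preserving homeomorphism of $[0,1]$, the intervals $\{x_0^i(K_t):i\in {\mathcal N}_m,\ 1\le t\le q\}$ are then pairwise disjoint and contained in $(0,\epsilon)$: for distinct $i$ they lie in the disjoint sets $x_0^i(K)$, and for a fixed $i$ they are order-isomorphic images of the disjoint subintervals $K_t$ of $K$.

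For each $t$ with $1\le t\le q$, I would then apply the construction in the proof of Proposition \ref{prop:x0-u-substitution} to the word $w_t$ using $K_t$ as the seed interval $I_0$; this selects finitely many domain/range pairs inside the blocks $\{x_0^i(K_t)\}_i$ and, via Lemma \ref{lemma:dyadic-list}, determines $u_1,\dots,u_{k-1}$ on those blocks so that each $u_j$ preserves every block $x_0^i(K_t)$ and so that $w_t(x_0,u_1,\dots,u_{k-1})(x^{(t)})\neq x^{(t)}$ for a chosen dyadic point $x^{(t)}$ in the interior of $K_t$. Because the block families for different values of $t$ are disjoint, these partial definitions do not conflict; I would set each $u_j$ equal to the identity off $\bigcup_{t,i} x_0^i(K_t)$, which guarantees $Support(u_j)\subseteq (0,\epsilon)\subseteq [0,\epsilon)$.

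It remains to verify that each $w_t$ is nontrivial on $[0,\epsilon)$ with respect to this single choice of $u_1,\dots,u_{k-1}$, and this is the step requiring the most care, since the $u_j$ now also carry the data of the other $q-1$ words. The point is that, as in the single-word proof, evaluating $w_t(x_0,u_1,\dots,u_{k-1})$ at $x^{(t)}\in K_t$ moves it only among the blocks $\{x_0^i(K_t)\}_{i\in {\mathcal N}_m}$: each $u_j$ preserves these blocks, and since $|w_t|\le m$ the partial exponent sums of $a$ read from the right never leave the range ${\mathcal N}_m$, so the orbit of $x^{(t)}$ never enters a block belonging to another word $w_{t'}$. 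Consequently the global action on $x^{(t)}$ agrees with the action computed in the construction for $w_t$ alone, giving $w_t(x_0,u_1,\dots,u_{k-1})(x^{(t)})\neq x^{(t)}$ with $x^{(t)}\in(0,\epsilon)$, and hence $w_t(x_0,u_1,\dots,u_{k-1})|_{[0,\epsilon)}\neq 1$ for every $t$. The disjointness of the families is exactly what decouples the $q$ simultaneous constructions, and confirming that each orbit stays within its own family is the main thing to check.
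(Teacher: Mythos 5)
Your proposal is correct and follows essentially the same route as the paper: the paper likewise fixes one seed interval in $(0,\epsilon)$ with pairwise disjoint $x_0^i$-translates, chooses $q$ disjoint dyadic subintervals $J_1,\dots,J_q$ of it, runs the single-word construction of Proposition \ref{prop:x0-u-substitution} for $w_t$ on the family $\{x_0^i(J_t)\}$, and glues the resulting partial homeomorphisms into one set $u_1,\dots,u_{k-1}$ that is the identity elsewhere. Your added verification that the orbit of each test point stays within its own block family is exactly the (implicit) decoupling the paper relies on.
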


\begin{proof}
Choose the interval $I_0$ as in the proof of Proposition
\ref{prop:x0-u-substitution} so that it is contained in $(0,
\epsilon)$ and its translates under $x_0^i$ are pairwise
disjoint for $i \in {\mathcal N}_m=\{-m, \cdots ,m\}$ and all contained in $(0, \epsilon)$.  Choose $q$
pairwise disjoint subintervals $J_1,J_2, \cdots ,J_q$ of $I_0$,
all having endpoints which are dyadic rationals, with the same
properties as $I_0$, that is, the translates of each $J_l$ are
pairwise disjoint under the above list of powers of $x_0$.
Following the proof of Proposition \ref{prop:x0-u-substitution},
for each $i$ with $1 \leq i \leq q$ define elements $u_{i,1},
u_{i,2},u_{i,3}, \cdots ,u_{i,k-1}$ supported in $\cup_{s \in
{\mathcal N}_m} x_0^s(J_i)$ so that $w_i(x_0,u_{i,1},
u_{i,2},u_{i,3}, \cdots ,u_{i,k-1})$ is nontrivial.

 For $1 \leq j \leq k-1$, define $u_j(x) = u_{l,j}(x)$ for $x \in \cup_{s \in
{\mathcal N}_m} x_0^s(J_l)$ over all $1 \leq l \leq q$, and
$u_j(x) = x$ for
 all other $x \in [0,1]$.  By construction, each $u_j$ is supported in $[0, \epsilon)$ for $1 \leq j \leq k-1$. Moreover, as homeomorphisms of $[0,1]$ we have
 $w_r(x_0,u_1,u_2, \cdots ,u_{k-1})|_{[0, \epsilon)} \neq 1$ for $1 \leq r \leq q$.
\end{proof}

\begin{theorem}\label{thm:girth}
Thompson's group $F$ has infinite girth.  Moreover, for any $l
\geq 3$ there is a sequence of generating sets $S_{l,n}$ of length
$l$ for $F$ so that the girth of $(F,S_{l,n})$ approaches infinity
as $n$ approaches infinity.
\end{theorem}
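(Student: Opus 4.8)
The plan is to construct, for each $n$, a generating set $S_{l,n}$ of length $l$ so that $(F,S_{l,n})$ satisfies no relation of length at most $n$, which by definition forces the girth to approach infinity as $n \to \infty$; combining this with Proposition \ref{covergence to free} then gives convergence to $F_l$ as a corollary. Since $F$ is generated by $x_0$ and $x_1$, a natural choice is to build a marking whose first generator is $x_0$ and whose remaining $l-1$ generators are carefully chosen elements $u_1, \dots, u_{l-1}$ of $F$; the content of Proposition \ref{prop:multiple-words} is precisely that we can choose such $u_i$ so that every nontrivial reduced word of length at most $m$ in $F(a,b_1,\dots,b_{l-1})$ remains nontrivial after substituting $x_0$ for $a$ and the $u_i$ for the $b_i$. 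However, I must first verify that the resulting set $\{x_0, u_1, \dots, u_{l-1}\}$ actually \emph{generates} $F$, not merely some subgroup; this is a subtlety since the $u_i$ built in Proposition \ref{prop:multiple-words} are supported in a small interval $[0,\epsilon)$.

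First I would address generation. The cleanest fix is to reserve one generator to restore generation without disturbing the girth argument. Concretely, for the case $l \geq 3$, I would take the first two generators to be $x_0$ and $x_1$ (so the set generates all of $F$ regardless of the others), and let the remaining $l-2$ generators be the elements $u_1, \dots, u_{l-2}$ supplied by Proposition \ref{prop:multiple-words}. But I must ensure no short relation holds, and the danger is that $x_0, x_1$ already satisfy the defining relations of ${\mathcal F}$, which have bounded length. So instead I would move $x_1$ (or a copy of $F$ generated by $x_0,x_1$) off to a disjoint interval: choose $\epsilon$ small, place the standard generating pair of an isomorphic copy $F_{[\epsilon,1]}$ of $F$ on $[\epsilon, 1]$, and place the girth-witnessing generators $u_i$ on $[0,\epsilon)$. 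Elements supported on disjoint intervals commute and interact trivially, so a relation in the combined marking either restricts to a relation among the $u_i$ (handled by Proposition \ref{prop:multiple-words}) or projects to a relation in the copy of $F$ on $[\epsilon,1]$.

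The key steps, in order, are then: (1) fix $n$ and set $m = n$; (2) apply Proposition \ref{prop:multiple-words} with parameter $m$ to the finite list $\mathcal{W}_{m,l}$ of \emph{all} nontrivial reduced words of length at most $m$ — note this is a finite set, so a single choice of $u_1, \dots, u_{l-1}$ works simultaneously for all of them — obtaining elements supported in $[0,\epsilon)$ with $w(x_0, u_1, \dots, u_{l-1})|_{[0,\epsilon)} \neq 1$ for every such $w$; (3) assemble the marking $S_{l,n} = \{x_0, u_1, \dots, u_{l-1}\}$, and argue it generates $F$ using the extra room on $[\epsilon,1]$ as above; (4) observe that any nontrivial reduced word $w$ of length at most $n=m$ in the free group $F_l$ evaluates to a nontrivial element of $F$ under this marking, since its restriction to $[0,\epsilon)$ is already nontrivial by construction, so $\mathrm{girth}(F, S_{l,n}) > n$; (5) let $n \to \infty$ to conclude the girth is infinite and that the markings witness infinite girth of $F$.

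The main obstacle I anticipate is reconciling the two competing demands on the marking: the $u_i$ from Proposition \ref{prop:multiple-words} are engineered to kill short relations but are supported on a tiny interval and plainly cannot generate $F$ on their own, whereas forcing generation naively (e.g. by including $x_1$ outright alongside $x_0$) reintroduces the short defining relations of $F$ and destroys the girth bound. The reason the problem requires $l \geq 3$ rather than $l \geq 2$ is exactly this tension — with only two generators there is no slack to simultaneously generate and avoid short relations by this disjoint-support device. I would therefore spend the most care on Step (3): precisely exhibiting a generating set for $F$ that decomposes across the partition $[0,\epsilon) \cup [\epsilon,1]$ in a way compatible with the girth-witnessing construction on the left piece, so that short words are controlled on $[0,\epsilon)$ while generation is guaranteed by the copy of $F$ on $[\epsilon,1]$.
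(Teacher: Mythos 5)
You have correctly isolated the central difficulty --- the elements $u_1,\dots$ supplied by Proposition \ref{prop:multiple-words} are supported in $[0,\epsilon)$ and cannot generate $F$ together with $x_0$, while adjoining $x_1$ outright reintroduces the defining relators of ${\mathcal F}$, which have lengths $10$ and $14$ --- but your proposed resolution of that difficulty does not work, and this resolution is precisely the crux of the proof. You suggest restoring generation by adding ``the standard generating pair of an isomorphic copy $F_{[\epsilon,1]}$ of $F$'' to the marking. Those two generators satisfy, among themselves, the images under $\phi_{[\epsilon,1]}$ of the relators of ${\mathcal F}$; these are relations of length at most $14$ in your marking, so the girth of $(F,S_{l,n})$ would be bounded independently of $n$ and the argument collapses. (It also pushes the size of the marking past $l$, or forces you to discard one of the $u_i$.) Your own fallback remark, that a relation ``either restricts to a relation among the $u_i$ or projects to a relation in the copy of $F$ on $[\epsilon,1]$,'' concedes exactly this problem: a short relation holding in the copy of $F$ supported on $[\epsilon,1]$ \emph{is} a short relation of the marked group, and nothing in your argument rules it out.

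The paper resolves the tension differently, and some version of its idea is genuinely needed. Only $l-2$ of the slots are used for the elements $u_{i,m}$ of Proposition \ref{prop:multiple-words}; the remaining slot carries the single generator $\beta = x_0^m u_{1,m}^m x_1$, which restores generation because $x_1 = u_{1,m}^{-m}x_0^{-m}\beta$. The long prefix is there for a combinatorial reason: by Fact \ref{fact}, a nontrivial reduced word $w$ of length at most $m$ in the marking cannot become freely trivial after substituting $x_0^m u_{1,m}^m$ for $\beta$ and deleting the occurrences of $x_1$, so the residual word $w_2$ in $x_0$ and the $u_{i,m}$ is a nontrivial reduced word of length at most $2m^2$. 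Applying Proposition \ref{prop:multiple-words} with parameter $2m^2$ rather than $m$ (your step (2) underestimates the budget; the rewriting inflates word lengths and this must be accounted for) makes $w_2$ act nontrivially at some $x \in (0,\epsilon)$, and the choice $2^{m^2}\epsilon < \frac{1}{2}$ guarantees that the deleted occurrences of $x_1$, supported in $[\frac{1}{2},1]$, never meet the relevant orbit, so $w(x) = w_1(x) = w_2(x) \neq x$. Without the ``hidden'' generator $\beta$, or an equivalent device that simultaneously restores generation and provably cannot be exploited by short words, your construction does not yield a generating set of $F$ with large girth.
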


\begin{proof}
Fix $l \in \N$ with $l \geq 3$.  To prove the theorem we exhibit a
family of generating sets of length $l$ for $F$ so that the girth
with respect to these generating sets approaches infinity. This
proves the second statement in the theorem, which includes the
first statement in the theorem.

Choose $m \in \N$, and fix $\epsilon>0$ so that $2^{m^2} \epsilon <
\frac{1}{2}$. This choice is made initially so that later in the
argument, the supports of certain elements are disjoint from
$supp(x_1) = [\frac{1}{2},1]$.

Using Proposition \ref{prop:multiple-words} with $k=l-1$ we can
find $u_{1,m},u_{2,m}, \cdots ,u_{l-2,m} \in F$ so that for all
$w(a,b_1,b_2, \cdots ,b_{l-2}) \in {\mathcal W}_{2m^2,l-1}$ we
know that as a homeomorphism $w(x_0, u_{1,m},u_{2,m},
\cdots ,u_{l-2,m})$ is nontrivial on the interval $(0,\epsilon)$.

Consider the following generating set of length $l$ for $F$:
$$ S_{l,m}= \{ \alpha = x_0, \beta = x_0^mu_{1,m}^mx_1, \gamma_1 = u_{1,m},\ldots,\gamma_{l-2} = u_{l-2,m} \}.$$
Any freely reduced word $w$ of length at most $m$ in the free group on generators $\{\alpha, \beta, \gamma_1, \cdots, \gamma_{l-2} \}$ can be rewritten as a word $w_1$ in the free group on the generators $\{x_0, x_1, u_{1,m}, \cdots u_{l-2,m}\}$ simply replacing $\alpha$ by $x_0$, $\beta$ by $x_0^m u_{1,m}^m x_1$, and $\gamma_i$ by $u_{i,m}$, and then freely reducing, at the cost of increasing word length by a factor of at most $2m+1$. Namely,
$$w(\alpha,\beta,\gamma_1,\gamma_2, \cdots ,\gamma_{l-2})=w_1(x_0,x_1,u_{1,m},u_{2,m}, \cdots ,u_{l-2,m})$$
and since the length of $w$ is at most $m$, then the length of $w_1$
is at most $2m^2+m$.

Similarly, if we remove all instances of the letter $x_1$ from
$w_1$ and reduce the resulting word, we obtain a new word
$w_2(x_0,u_{1,m},u_{2,m}, \cdots ,u_{l-2,m})$. Since $w_2$ can be obtained from $w$ by replacing $\alpha$ by $x_0$, $\beta$ by $x_0^m u_{1,m}^m$, and $\gamma_i$ by $u_{i,m}$, and then freely reducing, $w_2$ has length at most $2m^2$.

Now observe the following fact:

\begin{fact}\label{fact}
Let $H=<x,y,d>$ be a free group, and $G=<x,y,d |d=x^my^m>$ for some positive integer $m$. Then if $w \in H$ with $w=1$ in $G$, then the length of $w$ in the free group $H$ is greater than $m$.
\end{fact}

Fact \ref{fact} implies that $w_2$ has length strictly greater than 1.  We note that $w_2$, when viewed as an element of $F$, is nontrivial, as from Proposition \ref{prop:multiple-words} we know that all nontrivial freely reduced words $w(x_0,u_{1,m},u_{2,m}, \cdots ,u_{l-2,m})$
of length at most $2m^2$ are nontrivial in $F$. Moreover, there is some $x \in (0,\epsilon)$ such that $w_2(x) \neq x$.
As
$supp(u_{i,m}) \subset [0,\epsilon)$ by construction and
$supp(x_1) \subset [\frac{1}{2},1]$, these supports are disjoint.
Since there are at most $m^2$ occurrences of the generator $x_0$ in
$w_1$ and $2^{m^2}\epsilon < \frac{1}{2}$,  we see that $w_2(x) = w_1(x)$, and hence $w_1(x) \neq x$. As $w(x) =
w_1(x)$, we have shown that $w(\alpha,\beta,\gamma_1,\gamma_2,
\cdots ,\gamma_{l-2})$ is nontrivial. Hence, the girth of
$(F,S_{l,m})$ is at least $m$.
Therefore, the girth of $(F,S_{l,m})$ approaches infinity as $m$
approaches infinity.

\end{proof}

As a direct consequence of Proposition~\ref{covergence to free},
we obtain the following corollary.

\begin{corollary}\label{cor:free-limit}
For each $l \in \N, \ l \geq 3$, the sequence of marked groups
$$G_m=(F,\{x_0,x_0^mu_{1,m}^mx_1,u_{1,m},
 \cdots ,u_{l-2,m}\})$$ converges to the free group $(F_l,\{a_1,a_2, \cdots ,a_l \})$, where $u_{1,m}, \cdots ,u_{l-2,m}$ are the
 elements constructed above in the proof of Proposition \ref{prop:multiple-words} with $k=l-1$.
\end{corollary}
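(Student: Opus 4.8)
The plan is to recognize that this corollary requires no new substantive argument: it follows immediately by feeding the girth computation of Theorem~\ref{thm:girth} into the convergence criterion of Proposition~\ref{covergence to free}. The first step is simply to match up the objects. The marking defining $G_m$ is, letter for letter, the generating set
$$S_{l,m} = \{\alpha = x_0,\ \beta = x_0^m u_{1,m}^m x_1,\ \gamma_1 = u_{1,m}, \ldots, \gamma_{l-2} = u_{l-2,m}\}$$
constructed inside the proof of Theorem~\ref{thm:girth}, with $u_{1,m}, \ldots, u_{l-2,m}$ the elements produced by Proposition~\ref{prop:multiple-words} applied with $k = l-1$. Since $l \geq 3$ forces $k = l-1 \geq 2$, that application of Proposition~\ref{prop:multiple-words} is legitimate, and the marking genuinely lists $l$ elements.

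Next I would verify that $S_{l,m}$ is in fact a generating set for $F$, since Proposition~\ref{covergence to free} is phrased for markings. This is immediate and, incidentally, shows why the hypothesis $l \geq 3$ is essential: because $l-2 \geq 1$ we have the generator $\gamma_1 = u_{1,m}$ at our disposal, and we recover $x_0 = \alpha$ together with $x_1 = \gamma_1^{-m}\alpha^{-m}\beta$; as $\{x_0,x_1\}$ generates $F$ via the finite presentation ${\mathcal F}$, each $(F,S_{l,m})$ is a bona fide marked group of length $l$.

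The heart of the argument is then a single citation. Theorem~\ref{thm:girth} states precisely that $girth(F,S_{l,m}) \to \infty$ as $m \to \infty$. Applying Proposition~\ref{covergence to free} to $G = F$ with the length-$l$ markings $S_{l,m}$ (indexed by $m$), we conclude that $(F,S_{l,m})$ converges to the free group $F_l$ on $\{a_1, \ldots, a_l\}$ inside ${\mathcal G}_l$, which is exactly the assertion of the corollary.

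Because all of the difficulty was already absorbed into Theorem~\ref{thm:girth}, I do not expect any genuine obstacle here; the proof is a direct consequence, as announced. The only points demanding care are bookkeeping ones: confirming that the $u_{i,m}$ named in the corollary are literally those built in Theorem~\ref{thm:girth}, and reconciling the two length conventions, namely the $l$ generators of the marking against the parameter $k = l-1$ that is fed into Proposition~\ref{prop:multiple-words}.
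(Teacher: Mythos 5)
Your proposal is correct and matches the paper's treatment exactly: the paper presents this corollary as a direct consequence of Proposition~\ref{covergence to free} applied to the generating sets $S_{l,m}$ whose girth is shown to tend to infinity in Theorem~\ref{thm:girth}. Your additional bookkeeping (checking that $S_{l,m}$ generates $F$ via $x_1=\gamma_1^{-m}\alpha^{-m}\beta$ and reconciling $k=l-1$) is sound and only makes explicit what the paper leaves implicit.
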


\section{Non-free limits of $F$ within ${\mathcal G}_3$}
\label{sec:notfree}

The results in this section are motivated by considering several natural sequences of markings of $F$ within ${\mathcal G}_3$ of the form $\{x_0,x_1,g_n\}$ for some $g_n \in F$.  In particular, we consider the cases $g_n = x_n$, the $(n+1)$-st generator in the infinite presentation for $F$, and $g_n = x_0^n$.  However, the convergence of the resulting sequence of marked groups relies less on the actual elements chosen and more on their supports.  Hence we are able to state convergence results for more general sequences of markings of $F$.  As a corollary of Theorem \ref{thm:xn} we see that $(F,\{x_0,x_1,x_n\})$ is convergent in ${\mathcal G}_3$ and obtain a presentation for the resulting limit group; in Theorem \ref{thm:x0^n} we prove that $(F,\{x_0,x_1,x_0^n\})$ is convergent in ${\mathcal G}_3$ as well and give a presentation of the limit group.   For consistency in the notation of the marking, we identify $a=x_0$, $b=x_1$ and $c=g_n$, the additional generator in the marking.

When limits of free groups are studied, a standard construction used to create new examples of limit groups from existing examples is the generalized double.  Namely, if a group $G$ is a generalized double over a limit group $L$ (in the sense of Sela), then it is itself a limit group in this sense.  This construction is sufficient to create the entire class of limit groups (as limits of free groups), as proven by Champetier and Guirardel in the following theorem, derived from work of Sela.

\begin{theorem}[\cite{CG}, Theorem 4.6] A group is a limit group if and only if it is an iterated generalized double.
\end{theorem}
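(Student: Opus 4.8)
The plan is to prove the two implications separately, using quite different tools for each. Throughout, write $L$ for a limit group and recall that an \emph{iterated generalized double} means a group obtained from finitely generated free groups by finitely many applications of the generalized double construction defined below. The direction that an iterated generalized double is a limit group is elementary and proceeds by induction on the number of operations; the converse, that every limit group arises this way, rests entirely on Sela's structure theory \cite{S1,S2} and is where all the difficulty lies.

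For the implication that an iterated generalized double is a limit group, I would induct on the number of generalized double operations used. The base case is that a finitely generated free group is a limit group, which is immediate since it is (fully) residually free. For the inductive step, suppose $G$ is a generalized double over a limit group $L$, so $G = A *_C B$ (or an HNN extension $A*_C$) where $C$ is maximal abelian in each adjacent vertex group and there is an epimorphism $\rho\colon G \to L$ injective on each vertex group. I would show directly that $G$ is fully residually free: given finitely many nontrivial $w_1,\dots,w_t \in G$, I would first apply a separating automorphism of $G$ that conjugates one side of the splitting by a high power of a suitable element of the edge group $C$, pushing the syllables of each $w_i$ into general position, and then compose $\rho$ with an approximating homomorphism from $L$ to a free group surviving the finitely many relevant elements (such a map exists since $L$ is a limit group by the inductive hypothesis). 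This is the standard mechanism by which doubles along malnormal abelian subgroups inherit full residual freeness, and the maximal-abelian hypothesis on $C$ is exactly what makes the separating conjugation effective.

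The reverse implication is the deep one, and here I would invoke the machinery of \cite{S1,S2} rather than attempt an elementary argument. The strategy is to run an induction along a \emph{strict Makanin--Razborov resolution} of $L$, a finite sequence of proper epimorphisms $L = L_0 \twoheadrightarrow L_1 \twoheadrightarrow \cdots \twoheadrightarrow L_n$ of limit groups terminating in a free group $L_n$. First, by Grushko's theorem it suffices to treat freely indecomposable $L$, since the class of iterated generalized doubles is closed under free product. For freely indecomposable $L$, the content of the theorem is that each resolution step $L_i \twoheadrightarrow L_{i+1}$ exhibits $L_i$ as a generalized double over $L_{i+1}$: the splitting of $L_i$ comes from its abelian JSJ decomposition, the edge groups are maximal abelian, and the epimorphism to $L_{i+1}$ is injective on vertex groups precisely because the resolution is strict. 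Composing these steps down the resolution writes $L$ as an iterated generalized double over the free group $L_n$.

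The main obstacle is establishing the existence of such a strict resolution for an arbitrary freely indecomposable limit group and verifying that it terminates. Both points require the full force of Sela's theory: the existence and canonicity of the abelian JSJ decomposition, the shortening argument (iterating modular automorphisms drawn from the JSJ to strictly reduce complexity, so that each $L_{i+1}$ is a proper quotient), and the descending chain / equationally Noetherian property of limit groups that forces the resolution to be finite. For this reason I would present this direction not as a self-contained argument but as a consequence of these structural results, following \cite{CG}, rather than reproving Sela's foundational work.
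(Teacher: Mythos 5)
The paper does not actually prove this statement: it is imported verbatim from Champetier--Guirardel (\cite{CG}, Theorem~4.6) and used purely as a black box to motivate the generalized-double structure the authors observe in their limit groups in Section~4, so there is no in-paper proof to compare yours against. Judged on its own terms, your outline is a faithful sketch of how the result is actually established in \cite{CG}: the forward direction (iterated generalized double $\Rightarrow$ limit group) is an induction in which one precomposes the retraction $\rho$ with Dehn twists by high powers of an element of the edge group $C$ and then uses full residual freeness of the base, and the converse is genuinely a consequence of Sela's structure theory rather than anything elementary. Two places where your sketch hides the real work are worth flagging. First, the ``separating automorphism'' mechanism is not automatic: one needs the base limit group to be CSA and to satisfy the big powers condition, and the maximality of the images of $C$ is used exactly so that a syllable lying outside $C$ cannot commute with the twisting element; you assert that the twist puts syllables ``into general position,'' but verifying this is the entire content of that direction. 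Second, in the converse a single step of a strict Makanin--Razborov resolution is not literally a one-edge generalized double: the abelian JSJ may have many edges, abelian vertex groups, and QH (surface-type) vertex groups on which the resolution map is \emph{not} injective, so each step must be refined into one-edge splittings and the surface and abelian pieces handled separately --- this is also why the class of iterated generalized doubles must be closed under free products, which you use but do not justify. Neither point is fatal for an argument that explicitly defers to \cite{S1,S2} and \cite{CG}, but both are gaps if the sketch were meant to stand alone.
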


To make this precise, we define the notion of a generalized double over a limit group $L$ which is the limit of marked copies of free groups.

\begin{definition}
A generalized double over a limit group $L$ is a group $G = A*_CB$ (or $G=A*_C$) such that both vertex groups $A$ and $B$ are finitely generated and
\begin{enumerate}
\item $C$ is a nontrivial abelian group whose images under both embeddings are maximal abelian in the vertex groups, and
\item there is an epimorphism $\varphi: G \twoheadrightarrow L$ which is one-to-one in restriction to each vertex group.
\end{enumerate}
\end{definition}

While there is no analogous characterization for limits of non-free groups, we note that in each of our theorems below there is an amalgamated product (or HNN extension) of copies of Thompson's group $F$, or subgroups of $F$, over a maximal abelian subgroup which is a generalized double over the limit group obtained.  Thus the same structure emerges in our limit groups as appears in the case of limits of marked free groups.

The first example, given in Theorem \ref{thm:xn}, is a generalization of the natural sequence
of markings $\{x_0,x_1,x_n\}$ of $F$ in ${\mathcal G}_3$, that is, $a=x_0$, $b=x_1$ and $c=x_n$.

\begin{theorem}\label{thm:xn}
Let $$ G_n= \langle a, b, c | R_1=[ba^{-1}, a^{-1}ba],
R_2=[ba^{-1}, a^{-2}ba^2], c^{-1}w_n \rangle,$$ where $w_n$ is a
word in $a$ and $b$, such that viewed as a element of $F$ where
$a=x_0$ and $b=x_1$, $w_n$ has support in $[t_n,1]$, where
$\lim_{n \rightarrow \infty} t_n = 1$, and $w_n$ maps
$[\frac{3+t_n}{4},1]$ linearly to $[\frac{1+t_n}{2},1]$. Then

the sequence $(G_n, \{a,b,c\})$ converges to $(G, \{a,b,c\})$,
where
$$G= \langle a,b,c | R_1, R_2, R_3= [ca^{-1}, a^{-1}ca],
R_4=[ca^{-1}, a^{-2}ca^2], \text{and}~ [ba^{-1}, a^i c a^{-i}]
~\text{for all}~ i \in \Z  \rangle .$$
\end{theorem}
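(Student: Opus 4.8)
The plan is to establish convergence through Proposition \ref{convergence to anything}, applied with the sequence $G_n$ and limit $G$. The first observation is that $R_1,R_2$ are exactly the standard finite presentation of $F$ in $a=x_0$, $b=x_1$, and the only other relator of $G_n$ is $c^{-1}w_n$; hence $(G_n,\{a,b,c\})$ is literally $(F,\{x_0,x_1,w_n\})$. So I must show (1) every relator of $G$ holds in $G_n$ for all large $n$, and (2) any word vanishing in $G_n$ for infinitely many $n$ already vanishes in $G$.

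For (1), I would argue entirely with supports. The hypotheses on $w_n$ say $w_n$ agrees with $x_0$ on the top quarter $[\frac{3+t_n}{4},1]$ of its support; a direct computation then gives $\mathrm{supp}(w_nx_0^{-1})\subseteq[0,\frac{1+t_n}{2}]$, while $\mathrm{supp}(x_0^{-1}w_nx_0)=[\frac{1+t_n}{2},1]$ and $\mathrm{supp}(x_0^{-2}w_nx_0^2)=[\frac{3+t_n}{4},1]$. These are disjoint, so $x_0$ and $w_n$ satisfy the $F$-relations $R_3,R_4$, for every $n$. For the commutators, $\mathrm{supp}(ba^{-1})=\mathrm{supp}(x_1x_0^{-1})=[0,3/4]$, whereas $\mathrm{supp}(a^ica^{-i})=x_0^i([t_n,1])=[x_0^i(t_n),1]$; since $t_n\to1$ forces $x_0^i(t_n)\to1$ for each fixed $i$, these supports are disjoint once $n$ is large (depending on $i$), giving $[ba^{-1},a^ica^{-i}]=1$ in $G_n$ eventually. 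As any relation of $G$ is a product of conjugates of finitely many relators, it holds in $G_n$ for large $n$, proving (1).

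The real work is (2), which I would prove in contrapositive: assuming $w(a,b,c)\neq1$ in $G$, produce $N_0$ so that $w(x_0,x_1,w_n)\neq1$ in $F$ for $n\geq N_0$. Two retractions organize the argument. Sending $c\mapsto1$ defines $\phi\colon G\to P:=\langle a,b\rangle\cong F$, and sending $b\mapsto a$ defines $\psi\colon G\to Q:=\langle a,c\rangle$; checking that each kills every relator shows both are well defined, so $P,Q$ embed in $G$, and since $R_3,R_4$ are the $F$-presentation in $a,c$, we get $Q\cong F$. The commutators force $N:=\langle a^ica^{-i}:i\in\Z\rangle$ to be normal in $G$, to equal $\ker\phi$ (the normal closure of $c$), and to lie in $Q$. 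Now split on $\phi(w)$. If $\phi(w)\neq1$, the $c$-deleted word $\phi(w)$ moves some $x_*\in(0,1)$; because $w_n\to\mathrm{id}$ uniformly (its support $[t_n,1]$ collapses to $\{1\}$), $w(x_0,x_1,w_n)\to\phi(w)$ uniformly, so $w(x_0,x_1,w_n)(x_*)\neq x_*$ for large $n$. If instead $\phi(w)=1$, then $w\in N\subseteq Q$, hence $w\neq1$ in $Q\cong F$; rewriting $w$ as a word $w'$ in $a,c$ uses only finitely many relators, each holding in $G_n$ for large $n$ by (1), so $w(x_0,x_1,w_n)=w'(x_0,w_n)$ in $F$ for large $n$. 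Finally the substitution $Q=F\to F$, $a\mapsto x_0$, $c\mapsto w_n$, is injective: its image $\langle x_0,w_n\rangle$ is nonabelian (the conjugate supports $x_0([t_n,1])$ and $[t_n,1]$ differ, so $x_0,w_n$ do not commute), and the only nonabelian quotient of $F$ is $F$ itself since $[F,F]$ is simple; hence $w'(x_0,w_n)\neq1$.

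The main obstacle is precisely the case $\phi(w)=1$ of (2): it is the statement that the listed relators are complete, i.e.\ that $G$ collapses no further than $N\rtimes P$. The argument above localizes the difficulty into two structural facts — that $\psi$ embeds $Q\cong F$ and that $\ker\phi$ is contained in $Q$ — together with the injectivity of $c\mapsto w_n$ on $Q$, which is where the simplicity of $[F,F]$ and the precise support and germ hypotheses on $w_n$ become indispensable.
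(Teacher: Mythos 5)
Your proof is correct, but for the hard direction it takes a genuinely different route from the paper's. Both arguments treat direction (1) the same way, using supports to show that each relator of $G$ holds in $G_n$ for large $n$. For direction (2) the paper works combinatorially inside the presentation: it uses the commutators $[ba^{-1},a^ica^{-i}]$ to push every $c^{\pm1}$ to the left of every $b^{\pm1}$, writes $w=w_1w_2$ with $w_1$ a word in $a,c$ and $w_2$ a word in $a,b$, puts each factor into the standard infinite normal form, and then kills both factors by a slope-at-zero computation together with the observation that for large $n$ the point $w_2'(x)$ escapes the support of $w_1'$. You instead argue structurally and by contraposition: the retractions $\phi$ (kill $c$) and $\psi$ ($b\mapsto a$) exhibit $P=\langle a,b\rangle$ and $Q=\langle a,c\rangle$ as embedded copies of $F$, the commutators force $\ker\phi=\langle a^ica^{-i}: i\in\Z\rangle\subseteq Q$, and a nontrivial $w$ is then detected either by the uniform convergence $w(x_0,x_1,w_n)\to\phi(w)$ (when $\phi(w)\neq1$) or by the injectivity of $a\mapsto x_0$, $c\mapsto w_n$ on $Q$ (when $w\in\ker\phi$), the latter resting on the fact that every proper quotient of $F$ is abelian together with the non-commutation of $x_0$ and $w_n$. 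Your route imports that structural theorem about $F$ (\cite{CFP}, Theorem 4.3) but in exchange eliminates all of the normal-form bookkeeping and isolates exactly where the support and germ hypotheses on $w_n$ are used; the paper's route is longer but self-contained at the level of homeomorphisms and normal forms. Two harmless slips in your write-up: $R_3$ and $R_4$ are only guaranteed in $G_n$ once $t_n\geq 1/2$ (not for every $n$), and the supports you assert with equality (e.g.\ of $a^{-1}ca$ and $a^{-2}ca^2$) should be containments, since the support of $w_n$ is only assumed to lie in $[t_n,1]$; neither affects the argument, which only needs large $n$.
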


\begin{proof}
First, choose $N_0$ so that for $n \geq N_0$, $t_n \geq \frac{1}{2}$. Then for any $n\geq N_0$, $R_3$ and $R_4$ are true in $G_n$ for the following reason.
Since $w_n$ maps $[\frac{3+t_n}{4},1]$ linearly to $[\frac{1+t_n}{2},1]$, the support of $ca^{-1}$ is contained in $[0, \frac{1+t_n}{2}]$.
Now since $t_n \geq \frac{1}{2}$, $a^{-1}$ takes $[t_n,1]$ linearly to $[\frac{1+t_n}{2},1]$. As the support of $a^{-i}ca^i$ is $a^{-i}(Supp(c))$ and the support of $c$ is contained in $[t_n,1]$, we see that $Supp(a^{-i}ca^i) \subseteq a^{-i}[t_n,1] \subseteq [\frac{1+t_n}{2},1]$ for $i \geq 1$. As the supports of these two elements are disjoint, they must commute and we obtain the relations $R_3$ and $R_4$.

Next, for a relator of the form $[ba^{-1}, a^i c a^{-i}]$, choose
$N_i$ (and there are infinitely many such choices) so that
$a^i[t_{N_i},1] \subseteq [3/4,1]$. Then for any $n\geq N_i$, we see that $a^i c
a^{-i}$, as a homeomorphism in
$G_n$, has support in $[3/4,1]$ and hence commutes in $G_n$ with $ba^{-1}$, whose support lies in
$[0,3/4]$.  Therefore the relation $[ba^{-1}, a^i c a^{-i}]$ holds in $G_n$ for all $n \geq N_i$.

We follow Proposition \ref{convergence to anything}, and first prove that if $w$ is trivial in $G$
then $w$ is trivial in $G_n$ for sufficiently large $n$.  Let $w=w(a,b,c)$ be a word in $a,b,c$ and their inverses which is the
identity in $G$. Then $w$ may be expressed as a product of
conjugates of finitely many relators of $G$. But, for any finite
set of relators, there is some $N$ so that the relators all hold
in $G_n$ for $n \geq N$, and thus $w$ is trivial in $G_n$ for
all $n \geq N$.

Now suppose $w$ is a word in the letters $a, b,c$ and their
inverses which is the identity in $G_n$ for infinitely many $n$. We must show that $w$ must be trivial in $G$ as well.

We first show that the word $w$ can be rewritten in a certain constrained form in both the group $G$ and the groups $G_n$ for $n$ sufficiently large.
First, in $G$, using  relations of the form $[ba^{-1},
a^i c a^{-i}]$ for at most finitely many $i$, we may move all occurrences of $c^{\pm 1}$ to the left of
all occurrences of $b^{\pm 1}$ with the penalty of increasing the number of occurrences of the
generator $a$ in the word.

For example, suppose $w$ has a suffix of the form $w'=ba^ica^j$. Then $w'=(ba^{-1})(a^{i+1}ca^{-(i+1)})a^{j+i+1}$. Since $[ba^{-1},a^{i+1}ca^{-(i+1)}]$ is a relator in $G$, then in $G$, $w'=(a^{i+1}ca^{-(i+1)})(ba^{-1})a^{j+i+1}$. Moving through the word to the left in this manner, using a finite number of the commutator relators,
we can rewrite $w$ in this way as $w_1w_2$, where $w_1$
is a word in the generators $a$ and $c$ and $w_2$ is a word in the generators $a$ and $b$.
Choose $M$ large enough so that the finite collection of relators used in this process all hold in $G_n$ for $n > M$, and thus we can
rewrite $w$ in $G$ and in $G_n$  for $n>M$ in the desired form.

In addition, as long as we choose $M \geq N_0$  as well, then
in both $G_n$ and in $G$, the elements $a$ and $c$ satisfy the relators $R_3$ and $R_4$.
The subgroup $\langle a,c | R_3,R_4 \rangle \cong F$ and so we
can rewrite $w_1$ in an infinite
normal form using $c_0=a, c_1=c, \cdots  ,c_{1+i}= a^{-i} c a^i, \cdots$ for $i
\geq 1$. Similarly, we can rewrite $w_2$ using the relators $R_1$ and
$R_2$ in the standard infinite normal form in the letters $x_0=a, x_1=b, x_2, \cdots ,
x_{1+i}=a^{-i} b a^i, \cdots$ for $i\geq 1$, in both $G$ and $G_n$.
Suppose that $w_1=c_0^{\epsilon} w_1' c_0^{-\delta}$, where $w_1'$ has infinite
normal form in $c_i^{\pm 1}$ for $i \geq 1$, and $w_2=x_0^{\alpha} w_2'
x_0^{-\beta}$, where $w_2'$ has infinite normal form in $x_i^{\pm
1}$ for $i \geq 1$. Combine the terms $c_0^{-\delta}$ and $x_0^{\alpha}$, and if $\alpha=\delta$, these terms cancel. Otherwise, we move the combination to either the right or the left as follows. If $\alpha-\delta < 0$, rewrite $c_0^{-\delta}x_0^{\alpha}w_2'x_0^{-\beta}=x_0^{\alpha-\delta}w_2'x_0^{-\beta}$ in infinite normal form using $R_1$ and $R_2$. If $\alpha-\delta>0$, rewrite $c_0^{\epsilon} w_1' c_0^{-\delta}x_0^{\alpha}=c_0^{\epsilon} w_1' c_0^{\alpha-\delta}$ in infinite normal form in the generators $\{c_0, c_1, c_2, \dots \}$ and their inverses, using $R_3$ and $R_4$.
As a result of the application of these relators, subscripts of letters in $w_1'$ and $w_2'$ may be increased, and the final and initial exponents $\epsilon$ and $\beta$ may change as well.  Without loss of generality we then assume that there is an $M \in \N$ so that for $n>M$ we can write
$w=w_1w_2$ in both $G$ and $G_n$, where $w_1=c_0^{\epsilon}
w_1'$, $w_2= w_2' x_0^{-\beta}$, and $w_1'$ has infinite normal form
in $c_i^{\pm 1}$, for $i \geq 1$, and $w_2'$ has infinite normal form in
$x_i^{\pm 1}$, for $i \geq 1$.

Now recall that $w$ is the identity in $G_n$ for
infinitely many $n$. Then $w$ is certainly the identity for
infinitely many $n>M$, where we can write
$w=w_1w_2=c_0^{\epsilon}w_1'w_2'x_0^{-\beta}$ as described above. For any such $n$, in $G_n$, the word $w$ represents a
particular homeomorphism. As a homeomorphism, $c_1$ has support in
$[t_{n},1]$, so $c_k$ has support contained in $[t_n,1]$ as well for all $k \geq 2$.  Thus
$w_1'$ must also have support in $[t_{n},1]$. Similarly,
as the support of $x_1$ is $[\frac{1}{2},1]$ and the support of $x_k$ is contained in $[\frac{1}{2},1]$ for all $k \geq 2$,
we see that $w_2'$ has support in $[\frac{1}{2}, 1]$. But then the slope of
$w_1w_2$ near zero will be $2^{\beta-\epsilon}$; as this
homeomorphism is the identity in $G_{n}$, we must have
$\epsilon=\beta$.

For each of the infinitely many $n>M$ for which $w$ is the
identity in $G_n$, recalling that $x_0$ and $c_0$ are both equal
to the generator $a$, we may conjugate $w$ by $a^{\epsilon}$ to
obtain the word $w_1'w_2'$, which must also be the identity in $G_n$.
We claim that $w_2'$ must in fact be the empty word. For if not,
then thinking of $w_2'$ as a homeomorphism in $G_n$, there is some
$x \in (0,1)$ which  is not fixed by $w_2'$. However, for
sufficiently large $n$, $w_2'(x)$ will be outside of the support
of $w_1'$, and hence $w_1' w_2'$ will not fix $x$ in $G_n$ for
such large $n$. But if $w_2'$ is the empty word, then $w_1'$ must
be the empty word as well. But this means that in fact, for all
$n>M$, the original $w$ can be written as
$c_0^{\epsilon}x_0^{-\epsilon}$ in $G$, where $c_0=x_0=a$, and hence can be
transformed to the identity in $G$.
\end{proof}

Using the notation in the definition of the generalized double over a limit group, we remark that when
$A = B = F$ and $C = \Z$, where both inclusions of $C$ in $A$ and $B$ map $C$ to the subgroup generated by $x_0$,
then $A *_C B$ is a generalized double over the limit group $G$ obtained in Theorem \ref{thm:xn}.  We note that as the support of $x_0$
is the entire interval $[0,1]$, the subgroup generated by $x_0$ is a maximal abelian subgroup of $F$ \cite{BS2}.

In the previous example, the additional generator $c$ in $G_n$ had support
in a small neighborhood of $1$ for large $n$. Alternatively, if
we choose a sequence of additional generators to have supports in arbitrarily
small intervals close to zero but not including zero, we obtain
the following convergent sequence of marked copies of $F$.

\begin{theorem}\label{thm:jens-example}
Choose $g_n \in F$ to have support in $[r_n,s_n]\subseteq [0,1]$,
where $r_n < s_n < 2r_n$ and $\lim_{n \rightarrow \infty} r_n
=0$, and choose a word $w_n$ in $a^{\pm1},b^{\pm1}$ so that
$w_n(x_0,x_1)=g_n$. Let $$G_n=\langle a, b, c \mid R_1=[ba^{-1},
a^{-1}ba], R_2=[ba^{-1}, a^{-2}ba^2], c^{-1}w_n(a,b) \rangle.$$
Then the sequence $(G_n, \{a,b,c\})$ converges to $(G,
\{a,b,c\})$, where $$G= \langle a,b,c \mid R_1, R_2,
[a^ica^{-i},c], [a^i b a^{-i},c ] \text{ for every } ~i \in
\Z\rangle.$$
\end{theorem}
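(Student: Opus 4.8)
The plan is to verify the two conditions of Proposition~\ref{convergence to anything}, following the template of the proof of Theorem~\ref{thm:xn}. Throughout write $c_i = a^i c a^{-i}$, so $c_0 = c$. The structural picture guiding the argument is this: the listed relators state directly that $c$ commutes with every $c_i$ (first family) and, taking $i=0$ in the second family, with $b$. Conjugating $[c_i,c]=1$ by $a^j$ gives $[c_{i+j},c_j]=1$, so the $c_i$ generate an abelian subgroup $D$; conjugating $[a^iba^{-i},c]=1$ by powers of $a$ gives $[b,c_j]=1$ for all $j$, so $b$ centralizes $D$; and $a c_i a^{-1}=c_{i+1}$, so $a$ normalizes $D$. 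Hence $D \triangleleft G$, $G/D \cong \langle a,b\mid R_1,R_2\rangle = F$, and every word $w$ has a normal form $w = d\cdot f$ with $d=\prod_i c_i^{m_i}\in D$ and $f$ a word in $a,b$. The proof itself will not need $D$ to be free abelian; it uses only this rewriting together with homeomorphism estimates.

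For the first condition I would check that each defining relator of $G$ holds in $G_n$ for all sufficiently large $n$; since a word trivial in $G$ is a product of conjugates of finitely many relators, it is then trivial in $G_n$ once $n$ exceeds the finitely many thresholds involved. The relators $R_1,R_2$ hold in every $G_n$ by construction. For the two commutator families it suffices to see that, for each fixed $i$ and all large $n$, the support $[r_n,s_n]$ of $c=g_n$ is disjoint from the support of $a^i c a^{-i}$, namely $x_0^i([r_n,s_n])$, and from the support of $a^i b a^{-i}$, namely $x_0^i([\tfrac12,1])$. This is where the hypotheses enter: near $0$ the homeomorphism $x_0$ has slope $\tfrac12$, so for $n$ large $x_0^j([r_n,s_n])$ equals $[r_n/2^j,s_n/2^j]$ when $j\ge 0$ and $[2^{|j|}r_n,2^{|j|}s_n]$ when $j<0$. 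The inequality $s_n<2r_n$ is precisely what forces these translates to be pairwise disjoint, and since $r_n\to 0$ the interval $x_0^i([\tfrac12,1])$, whose endpoints are bounded away from $0$ for fixed $i$, is disjoint from $[r_n,s_n]$ for $n$ large.

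For the second condition, suppose $w$ is trivial in $G_n$ for infinitely many $n$. Using only the finitely many relations $[b,c_i]=1$ and $[c_i,c_j]=1$ with $|i|,|j|$ bounded by the length of $w$ (each a consequence of the listed relators, and each valid in $G_n$ for $n$ past some $M$), I would rewrite $w$ in the normal form $w=d\cdot f$ above, valid simultaneously in $G$ and in every $G_n$ with $n>M$; here the exponents $m_i$ and the word $f$ are fixed integers and a fixed word in $a,b$. Now fix one of the infinitely many $n>M$ with $w=1$ in $G_n$ and read $w=df=1$ as an identity of homeomorphisms. By the disjointness computation, for $n$ large all the $c_i=x_0^i g_n x_0^{-i}$ occurring in $d$ have pairwise disjoint supports contained in $[0,\delta_n]$ with $\delta_n\to 0$; hence $d$ is supported in $[0,\delta_n]$, and so $f=d^{-1}$ is supported there as well. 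But $f$ is an element of $F$ independent of $n$, so letting $n\to\infty$ along the chosen subsequence forces $f$ to be the identity off every neighborhood of $0$, whence $f=1$ in $F$ and therefore in $G$.

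It then remains to kill $d$. With $f=1$ we obtain $d=\prod_i c_i^{m_i}=1$ in $G_n$ for each of our infinitely many $n$. Since $F$ is torsion-free, each $c_i$ is nontrivial of infinite order, and as the $c_i$ have pairwise disjoint supports the product can vanish only if every $m_i=0$. These exponents are the same fixed integers for all $n$, so all $m_i=0$, the word $d$ is empty, and $w=d\cdot f=1$ in $G$, completing the second condition. The main obstacle, exactly as in Theorem~\ref{thm:xn}, is the bookkeeping that the normal-form rewriting uses only finitely many relations, so that it transfers from $G$ to the groups $G_n$, combined with the support estimates; the hypothesis $r_n<s_n<2r_n$ with $r_n\to 0$ is used precisely to guarantee both the pairwise disjointness of the translated supports and their collapse toward $0$, and I expect these geometric estimates to demand the most care.
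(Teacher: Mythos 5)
Your proposal is correct and follows essentially the same route as the paper's proof: verify each relator of $G$ holds in $G_n$ for large $n$ via disjointness of the translated supports (using $s_n<2r_n$ and $r_n\to 0$), then rewrite any word killed by infinitely many $G_n$ as a product of commuting conjugates $c_i=a^ica^{-i}$ times a word in $a,b$, using finitely many relations valid in both $G$ and $G_n$, and kill each factor by support considerations. Your packaging via the normal abelian subgroup $D$ and the direct observation that $f=d^{-1}$ is supported in $[0,\delta_n]$ with $\delta_n\to 0$ is a slightly cleaner presentation of the paper's slope-near-the-endpoints argument, but the substance is identical.
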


\begin{proof}
First, suppose $w=1$ in $G$. Then $w$ can be transformed to the
empty word using only a finite number of relations of $G$. There exists
some $M$ so that $s_n < 1/2$ for all $n \geq M$, and then it
follows that $x_0([r_n,s_n])$ is disjoint from $[r_n,s_n]$.
Therefore, for any $n \geq M$, the relations of the form
$[a^ica^{-i},c]$ hold in $G_n$. On the other hand, we may choose $N_i$ so that $[a^i b
a^{-i},c]=1$ holds in $G_n$ for $n>N_i$; it follows that there is an $N = \max \{M,N_i\}$ so that the finite set of relations used to transform $w$ to the empty word hold in $G_n$ for $n \geq N$, and thus $w$
is trivial in $G_n$ for $n \geq N$.

Now suppose $w=1$ in $G_n$ for infinitely many
$n$; once again we show that $w$ can be rewritten in a particular form in both $G_n$ and $G$ for sufficiently large $n$. By inserting pairs of the form $a^{-i}a^i$ adjacent to certain instances of the generator $c^{\pm 1}$ in the
word $w$ as in the proof of Theorem \ref{thm:xn} and using a finite number of the fourth type of relation given in the presentation for $G$, the word
$w$ can be rewritten in the form
$w_1w_2$ (in both $G$ and $G_n$ for sufficiently large $n$) where $w_1$ is a word in the generators $a^{\pm 1}$ and $c^{\pm 1}$, and $w_2$ is a
word in the generators $a^{\pm 1}$ and $b^{\pm 1}$.

As above, let $c_0=c$ and $c_i=a^i c a^{-i}$ for $i \geq 1$. Then the word $w_1$ can be rewritten, in any $G_n$ and in $G$, as a word in the $c_i^{\pm 1}$, at the expense of a power of $a$ at the
right, which we shift into $w_2$. So we may assume that $w_1$ is a word
in the $c_i$, and $w_2$ is a word in $a$ and $b$. For sufficiently large
$n$, as a homeomorphism in $G_n$, the element $w_1$ has slope 1 near $x=0$
and $x=1$, so $w_2$ must be supported in $[\epsilon, 1-\epsilon]$
for some $\epsilon > 0$. But for sufficiently large $n$, we know that $w_1$
is supported in $[0,\epsilon]$ as a homeomorphism in $G_n$.
Therefore, since $w$ is the identity in $G_n$ for infinitely many of these $n$, it follows that $w_2$ must be the identity, and thus can be
reduced to the empty word using $R_1$ and $R_2$. Therefore, $w_1$
is the identity in $G_n$ for infinitely many $n$. But in both $G_n$ for $n>M$ and
in $G$, the element $c_i$ commutes with $c_j$ for every $i$ and $j$. Since the
supports of these elements, viewed as homeomorphisms) are disjoint for $n>M$, $exp_{i}(w_1)$, the net exponent of all
occurrences of $c_i$, must be zero for every $i$. Thus $w_1$ can
be transformed to the empty word in $G$, and hence $w$ is the identity in $G$.
\end{proof}

Using the notation in the definition of a generalized double over a limit group, we remark that when
$A = F$, $B = \Z \wr \Z = \langle a,c | [a^{-i}ca^i,a^{-j}ca^j] \text{ for all } i,j \in \Z \rangle$ and $C = \Z$, where the inclusions of $C$ in $A$ maps $C$ to the subgroup generated by $x_0$, and the inclusion of $C$ in $B$ maps $C$ to the subgroup generated by $a$, then $A *_C B$ is a generalized double over the limit group $G$ obtained in Theorem \ref{thm:jens-example}.

In the list of motivating ``natural" sequences of markings, it remains to consider the case where
the additional generator $g_n$ of $G_n$ is taken to be $x_0^n$.

\begin{theorem}\label{thm:x0^n}
Let $$G_n= \langle a, b, c | R_1=[ba^{-1}, a^{-1}ba],
R_2=[ba^{-1}, a^{-2}ba^2], c^{-1}a^{n}\rangle.$$ Then the sequence
$(G_n, \{a,b,c\})$ converges to $(G, \{a,b,c\})$, where $$G=
\langle a,b,c | R_1, R_2,[c,a], [
(a^{-j}ba^j)a^{-1},c^{-i}(a^{-k}ba^k) c^i], i \geq 1, j \geq 0 , k
\geq 0 \rangle .$$
\end{theorem}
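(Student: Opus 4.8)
The plan is to verify the two conditions of Proposition \ref{convergence to anything}, following the template of the proofs of Theorems \ref{thm:xn} and \ref{thm:jens-example}. The organizing observation is that $G_n\cong F$: the relator $c^{-1}a^n$ merely defines $c=x_0^n$, so a word $w(a,b,c)$ is trivial in $G_n$ iff $w(x_0,x_1,x_0^n)=1$ in $F$, and under this identification $c^{-i}(a^{-k}ba^k)c^i = x_{k+1+in}$ while $(a^{-j}ba^j)a^{-1}=x_{j+1}x_0^{-1}$. For the first (easy) condition, that $w=1$ in $G$ forces $w=1$ in $G_n$ for large $n$, I would check each defining relator of $G$ in $G_n$; since a trivial word uses only finitely many relators this yields a uniform $N$. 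Here $R_1,R_2$ hold in every $G_n$, and $[c,a]=[a^n,a]=1$ holds in every $G_n$. For $[(a^{-j}ba^j)a^{-1}, c^{-i}(a^{-k}ba^k)c^i]$ with $i\geq 1$, I would compute that $x_{j+1}x_0^{-1}$ is the identity on $[1-2^{-(j+2)},1]$, so its support lies in $[0,1-2^{-(j+2)}]$, whereas $x_{k+1+in}$ is supported in $[1-2^{-(k+1+in)},1]$; these are disjoint as soon as $k+1+in\geq j+2$, which holds for all large $n$ because $i\geq 1$, so the commutator holds in $G_n$ for large $n$.

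For the second (hard) condition, suppose $w=1$ in $G_n$ for infinitely many $n$. Using only $[c,a]=1$ (valid in $G$ and every $G_n$) I would collect the powers of $c$ to the right, writing $w=V\,c^{\,e}$ with $e=\exp_c(w)$ and $V$ a word in $a$ and the conjugates $b_i:=c^{-i}bc^i$. Applying the homomorphism ``$\log_2$ of the slope at $0^+$'' in $G_n\cong F$ (under which $a=x_0$ has value $-1$ while every $b_i$ is the identity near $0$, since each $b_i$ is supported away from $0$) sends the image of $w$ to $-\exp_a(V)-ne$; as this vanishes for infinitely many $n$, both $e=0$ and $\exp_a(V)=0$. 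Thus $w=V$ in $G$, and conjugating $V$ by a suitable power of $c$ (which preserves triviality in $G$ and in each $G_n$ and preserves $\exp_a$) I may assume every index $i$ occurring is $\geq 0$. It remains to show that such a word $V$, a product of $a^{\pm1}$ and $b_i^{\pm1}$ ($i\geq 0$) with $\exp_a(V)=0$, trivial in $G_n\cong F$ for infinitely many $n$, is trivial in $G$.

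I would prove this by induction on the largest index $m$ appearing in $V$. If $m=0$ then $V\in\langle a,b\rangle=F$ and triviality in $G_n\cong F$ gives triviality in $G$. If $m\geq 1$, I would mimic the rewriting of Theorem \ref{thm:xn}: using the relators $[x_{j+1}x_0^{-1},a^{-k}b_i a^k]$ (with $i\geq 1$) and the ``$a$-wrapping'' device there, move every letter of positive level to the left, writing $V=UV_0$ with $U\in\langle a,b_i:i\geq1\rangle$ and $V_0\in\langle a,b\rangle$; then strip the bare powers of $a$, as in the slope-at-$0$ normalization of Theorem \ref{thm:xn} and up to conjugation, so that $U$ becomes a product of conjugates $a^{-k}b_i a^k$ ($i\geq1$) and $V_0$ becomes a word $\hat V_0$ in the $x_\ell$ ($\ell\geq1$). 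In $G_n\cong F$ the factor $\hat V_0$ is supported in $[\frac12,1]$, while $U$, being built from conjugates of $b_i=x_{1+in}$ with $i\geq1$, is supported in a tiny interval near $1$ whose left endpoint tends to $1$ as $n\to\infty$. Hence if $\hat V_0\neq 1$ it moves some point $x$ to a value $\hat V_0(x)<1$ independent of $n$, which for large $n$ lies outside the support of $U$, so $V(x)=U(\hat V_0(x))=\hat V_0(x)\neq x$, contradicting $V=1$ in $G_n$. Therefore $\hat V_0=1$ in $F$, so $V=U$ in $G$ and $U=1$ in $G_n$ for infinitely many $n$; conjugating $U$ by $c$ lowers every level by one and preserves $\exp_a=0$, producing a word of maximal level $\leq m-1$ to which the induction hypothesis applies.

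The main obstacle is precisely this induction. Unlike in Theorems \ref{thm:xn} and \ref{thm:jens-example}, the extra generator $c=x_0^n$ is a power of $a$ and hence has full support, so there is no single localized ``$c$-part'' to split off; instead the conjugates $b_i=x_{1+in}$ occupy infinitely many nested scales $2^{-(1+in)}$ near $1$, and one must peel them off one level at a time, each step carefully eliminating bare powers of $a$ so that the level-$0$ part (supported in $[\frac12,1]$) genuinely separates from the higher-level part (supported near $1$). Making the decomposition $V=UV_0$ and the bare-$a$ normalization rigorous for the subgroup $\langle a,b_i:i\geq1\rangle$, which is itself nonabelian, is the technical heart of the argument, and is where I expect the bookkeeping to be most delicate.
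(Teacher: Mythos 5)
Your proposal is correct in outline, and on the hard direction it takes a genuinely different route from the paper's. Your easy direction (checking each relator of $G$ in $G_n$ via supports) and your exponent-sum step (the slope-at-$0^+$ homomorphism killing $e$ and $\exp_a(V)$) coincide with the paper's. After that the paper does not induct on the conjugation depth: it writes $w$ as $x_0^ac^{n_1}w_1\cdots c^{n_k}w_kx_0^{-b}$, converts it into a product of $c$-conjugated blocks $c^{-m_i}w_ic^{m_i}$ with all $m_i\geq 0$ and some $m_i=0$, and runs a minimal-counterexample argument on the number $k$ of blocks: each depth-zero block $w_i$ is split into its positive part $p_i$ and negative part $q_i$, and the shift relations $x_{j+1}(c^{-i}x_{k+2}c^i)=(c^{-i}x_{k+1}c^i)x_{j+1}$ are used to push $p_i$ to the far left and $q_i$ to the far right; a support argument then forces $p=q^{-1}$ and leaves a shorter counterexample. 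Your induction on the maximal level $m$, peeling off the level-$0$ part $V_0$ by support separation and then conjugating by $c$ to drop the level, is a cleaner organizing principle, and your support argument for killing $\hat V_0$ is sound. The one place where you must be careful --- and it is exactly the spot you flag --- is the decomposition $V=UV_0$: writing $V$ as a product of $a$-conjugates of the letters $b_i^{\pm1}$ and sorting the level-$\geq 1$ factors to the left requires commutators $[(a^{-j}ba^j)a^{-1},\,c^{-i}(a^{-k}ba^k)c^i]$ in which $j$ or $k$ may become negative, and these are not among the listed relators of $G$. They are nonetheless derivable: conjugating such a commutator by a suitable power of $a$ shifts $j$ and $k$ simultaneously until both are nonnegative (using $R_1,R_2$ to recognize $a^{-j}ba^{j}a^{-1}$ as $x_{j+1}x_0^{-1}$), so your sorting is legitimate in $G$ and, using only finitely many such consequences, in $G_n$ for large $n$. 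Note that the paper's positive/negative splitting is designed precisely to avoid this issue, since moving positive letters left and negative letters right only ever increases the inner index $k$. With that derivation supplied, your argument closes; what your approach buys is a more transparent induction, at the cost of having to justify a larger family of derived commutation relations than the paper ever invokes.
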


\begin{proof}
Note first that in both $G$ and $G_n$, $a$ and $b$ generate a
subgroup isomorphic to $F$, and that the relator $[c,a]$ holds in $G_n$ for all $n$ as $c=a^n$.
As above, we identify $x_0$ with
$a$, $x_1$ with $b$, and recall that $x_{i+1} = a^{-i}ba^i$; making these substitutions into the
final relator in the presentation of $G$ with $c=a^n$, we see that this relator, viewed in $G_n$, claims that $x_{ni+k+1}$ commutes with $x_{j+1}x_0^{-1}$.
As the support of $x_{ni+k+1}$  is easily seen to be $[1- \frac{1}{2^{ni+k+1}},1]$ and the support of $x_{j+1}x_0^{-1}$ is $[0,1- \frac{1}{2^{j+2}}]$,
a relator of this form is satisfied in $G_n$ as long as $j+2\leq ni+k+1$, that is, $n \geq \frac{j-k+1}{i}$.

We note for later use that the last two types type of relators
in $G$, when combined, yield relations of the form $$x_{j+1} (c^{-i}
x_{k+2}c^i)=(c^{-i}x_{k+1}c^i)x_{j+1},$$ for $j \geq 0, i \geq 1,
k \geq 0$. Furthermore, if $n \geq (j-k+1)/i$, then this relation
holds in $G_n$ as well.

Suppose $w$ is a word in $a,b,c$ and their inverses which is the identity in
$G$. Then it may be expressed as a product of conjugates of
finitely many relators of $G$. But, for any finite set of
relators, there is some $M$ so that the relators all hold in $G_n$
for $n \geq M$, so $w$ is also the identity in $G_n$ for all $n
\geq M$.

Given any word $w$ in $a,b,c$ and their inverses, $w$ can be expressed, in both
$G$ and $G_n$ for any $n$, as
$$c^{n_1}w_1c^{n_2}w_2\ldots c^{n_k}w_k,$$ where for each $i$, $w_i$ is a word
in $a^{\pm 1}$ and $b^{\pm 1}$ and $n_j \neq 0$ for $2 \leq j \leq k$. As usual, using the notation $x_0=a, x_1=b,
x_{1+i}=a^{-i} b a^i$, we may assume for each $i$ that $w_i$ is a word in the
standard infinite normal form for $F$. Next, since $x_0$ commutes
with $c$ in both $G$ and $G_n$, and then also using relators
involving just the $x_j$, that is, the standard relators in $F$ which are a consequence of $R_1$ and $R_2$, we may assume (changing the $w_i$'s
without renaming) that $w$ is of the form
$$x_0^ac^{n_1}w_1c^{n_2}w_2\ldots c^{n_k}w_kx_0^{-b},$$ where $a$
and $b$ are positive integers and $w_i$ is a word in infinite
normal form without $x_0^{\pm 1}$, in both $G$ and in $G_n$ for any $n$.

Now suppose that $w$ is the identity in $G_n$ for infinitely many
$n$. Then it follows that the total exponent sum of $x_0$ must be
zero for those indices $n$, in other words, $(a-b)+(n_1+n_2+\cdots
+n_k)n=0$ for each of those indices $n$. Therefore $a=b$ and
$n_1+n_2+\cdots +n_k=0$. But $w=id$ in $G_n$ if and only if
$x_0^{-a}wx_0^a=id$, so we know that
$$w'=c^{n_1}w_1c^{n_2}w_2\ldots c^{n_k}w_k,$$ where $w_i$ is a word
in infinite normal form without $x_0^{\pm 1}$, is the identity in
$G_n$ for infinitely many $n$. Moreover, conjugating if necessary,
we may assume that $n_i \neq 0$ (for $i \neq 1$) and $w_i$ is not
the empty word, for all $i$. Now since $n_1+n_2+\cdots +n_k=0$, we
may rewrite $w'$ as:
$$w'=(c^{n_1}w_1c^{-n_1})(c^{n_1+n_2}w_2c^{-(n_1+n_2)})\ldots (c^{n_1+\cdots n_{k-1}}w_{k-1}c^{-(n_1+\cdots n_{k-1})})(w_k).$$

We claim that the word $w'$ must also be the identity in $G$. For if not,
suppose that amongst the words of this form which are the identity
in $G_n$ for infinitely many $n$ and are not the identity in $G$,
the word $w'$ has $k$ minimal. Next, let $t=max(n_1, n_1+n_2, \ldots,
n_1+n_2+\cdots+n_{k-1})$, and since $w'$ is the identity (in
$G_n$ or $G$) if and only if $c^{-t}w'c^t$ is the identity, we may
assume $w'$ is of the form:
$$(c^{-m_1}w_1c^{m_1})(c^{-m_2}w_2c^{m_2})\cdots
(c^{-m_k}w_kc^{m_k}),$$ where $m_i\geq 0$ for all $i$, and at least
one $m_i=0$. For each value of $i$ where $m_i=0$, the subword
$c^{-m_i}w_ic^{m_i}=w_i=p_iq_i$, where $p_i$ (resp.
$q_i$) is a positive word (resp. a negative word) in normal
form involving no $x_0^{\pm1}$. Thus, in $G$, using finitely many
relators of the form $$x_{j} (c^{-i}
x_{s+1}c^i)=(c^{-i}x_sc^i)x_j$$ for $j \geq 1, i \geq 1$, we can
move $p_i$ to the left and $q_i$ to the right of the expression, until eventually
$w$ can be written (reindexing) as $p (\Pi_{i=1}^l
c^{-m_i}w_ic^{m_i})q$, where $l<k$ and $p$ (resp. $q$) is a
positive (resp. negative) word in infinite normal form.
Since this can be done in $G$ using only finitely many relations, for
sufficiently large $n$ it can be done in $G_n$ as well, so we may
assume it can be done in $G_n$ for infinitely many $n$. Now notice
that by choosing the minimal value of the index $n$ to be perhaps even
larger, we can ensure that once we replace $c$ by $x_0^n$ in
$G_n$, the product $\Pi_{i=1}^l c^{-m_i}w_ic^{m_i}$, when written in the standard infinite
normal form, involves only the generators $x_j$ with $j$ much
larger that the subscripts of the generators in $p$ or in $q$. But
since $w$ is the identity for infinitely many $n$, it follows that
$p=q^{-1}$. Hence, $\Pi_{i=1}^l c^{-m_i}w_ic^{m_i}$, with $l <k$,
is the identity in $G_n$ for infinitely many $n$, but is not the
identity in $G$, which is a contradiction, as we assumed that $k$ was minimal.
\end{proof}

Using the notation in the definition of the generalized double over a limit group, we remark that when
$A = F$, and $C = \Z$, where both inclusions of $C$ in $A$ map $C$ to the subgroup generated by $x_0$, then $A *_C$ is a generalized double over the limit group $G$ obtained in Theorem \ref{thm:x0^n}.

\end{document}